\theoremstyle{plain}
\newtheorem{thm}{Theorem}
\newtheorem*{thm*}{Theorem}
\newtheorem{cor}{Corollary}
\newtheorem{lem}{Lemma}
\newtheorem*{lemm}{Lemma}
\newtheorem{prope}{Property}
\theoremstyle{definition}
\newtheorem{defn}{Definition}
\newtheorem*{conj*}{Conjecture}
\newcommand{\inv}{\ensuremath{\texttt{AG}}}
\theoremstyle{remark}
\theoremstyle{definition}
\newcommand{\acknowledgment}[1]{\vspace{5mm}\singlespacing
	{\noindent\textbf{\textit{Acknowledgment\/}:} #1}
}
\title{Extremal Chemical  Graphs for the Arithmetic-Geometric Index}
\author{Alain Hertz$^{a,}$\footnote{Corresponding author: alain.hertz@gerad.ca.}\ , S\'ebastien Bonte$^b$, Gauvain Devillez$^{b,c}$,\\Valentin Dusollier$^b$, Hadrien M\'elot$^b$, David Schindl$^d$\\[2ex]
\small $^a$Department of Mathematics and Industrial Engineering,\\[-3pt]
\small	Polytechnique Montr\'eal - Gerad, Montr\'eal, Canada\\
\small	$^b$Computer Science Department - Algorithms Lab\\[-3pt]
\small	University of Mons, Mons, Belgium\\
\small	$^c$Faculty of Mathematics and Physics\\[-3pt]
\small	University of Ljubljana, Ljubljana, Slovenia\\
\small	$^d$Haute \'ecole de gestion de Gen\`eve\\[-3pt]
\small	University of Applied Sciences Western Switzerland, Gen\`eve, Switzerland\\
}
\date{\today}
\begin{document}

\maketitle

\begin{abstract}
The arithmetic-geometric index is a newly proposed degree-based graph invariant in mathematical chemistry.  We give a sharp upper bound on the value of this invariant for connected chemical graphs of given order and size and characterize the connected chemical graphs that reach the bound.
We also prove that the removal of the constraint that extremal chemical graphs must be connected does not allow to increase the upper bound.
\end{abstract}

\onehalfspacing

\section{Introduction}

In mathematical chemistry, and more specifically in chemical graph theory, topological indices are values that characterize a graph representing a molecule. This value is correlated with some properties of said molecule. One largely studied class of such indices is composed of degree-based invariants. A well known example of these is the Randi\'c index, proposed by Randi\'c~\cite{BIBLIO13} in 1975. Following the interest in this index, many others were later introduced. Of interest here is the \emph{arithmetic-geometric index} proposed in 2015 by Shegehalli and Kanabur \cite{BIBLIO8}. Let $G$ be a graph with edge set $E$ and let $d_u$ and $d_v$ be the degrees of the endpoints of an edge $uv\in E$. The arithmetic-geometric index $\inv(G)$ of $G$, is defined as
\[
\inv(G) = \sum_{uv \in E} \frac{d_u + d_v}{2 \sqrt{d_ud_v}}.
\]
The summand in the above formula is the ratio between the arithmetic and geometric means of $d_u$ and $d_v$. If we replace each summand  by its inverse, we obtain another graph invariant known as the 
geometric-arithmetic index introduced in 2009 by Vuki{\v{c}}evi{\'c} and Furtula~\cite{BIBLIO14} and studied for example in \cite{R16,B19,A20,Das}.
Several papers have focused on properties of the arithmetic-geometric index. For example, Shegehalli and Kanabur \cite{BIBLIO8,BIBLIO9} give the value of this index for some families of graphs. Lower bounds for graphs of fixed size (i.e., number of edges) are provided in \cite{BIBLIO2,BIBLIO4,BIBLIO5}, while an upper bound for graphs with fixed size and order (i.e., number of vertices) is established in \cite{BIBLIO5}. Upper bounds for graphs of fixed size and fixed minimum and maximum degrees are given in  \cite{BIBLIO2,BIBLIO3,BIBLIO4}. 
The maximum value of the arithmetic-geometric index of graphs of fixed order is known for  
unicyclic graphs \cite{BIBLIO6},
bicyclic graphs \cite{BIBLIO7}, bipartite graphs and trees \cite{BIBLIO5}. In addition,  Vujo{\v{s}}evi{\'c} et al.  \cite{BIBLIO5} have  characterized the  chemical trees with maximum arithmetic-geometric index value. The relationship between the arithmetic-geometric index and other topological indices is studied, for example in \cite{BIBLIO1,BIBLIO2,BIBLIO3,BIBLIO4,BIBLIO10,BIBLIO11,BIBLIO12}.

In this paper, we prove the following upper bound on the  value of the arithmetic index of a connected chemical graphs $G$ of order $n$ and size $m$:
$$\inv(G)\leq\frac{2n+5m}{6}+\left\{\begin{array}{ll}0& \mbox{\emph{if} } 2m-n\equiv 0 \bmod{3},\\
\frac{3}{\sqrt{2}}-\frac{13}{6}&\mbox{\emph{if} } 2m-n\equiv 1 \bmod{3},\\\frac{21}{4\sqrt{3}}-\frac{37}{12}&\mbox{\emph{if} } 2m-n\equiv 2 \bmod{3}.\end{array}\right.$$

We show that with the exception of 22 $(n,m)$ pairs, the bound is sharp, and  we characterize the connected chemical graphs of order $n$ and size $m\geq n-1$ that reach the bound. We also prove that no better value can be obtained by removing the constraint that the graph must be connected. Note that for $m=n-1$, this gives a characterization of extremal chemical trees of fixed order $n$. While such a characterization is given in \cite{BIBLIO5}, we show that their result is not valid for 7 values of $n$.

In the next section we fix some notations, while Section \ref{sec:preliminaries} is devoted to observations that will motivate our characterization of connected chemical graphs with maximum arithmetic-geometric index value. Lemmas are proved in Section \ref{sec:tools} and then used in Section \ref{sec:MainTheorem} to prove the main theorem. 

\section{Notations}

For basic notions of graph theory that are not defined here, we refer to Diestel \cite{Diestel00}. Let $G=(V,E)$ be a simple undirected graph. The \emph{order} $n = |V|$ of $G$ is its number of vertices and the \emph{size} $m=|E|$ of $G$ is its number of edges. We write $G \simeq H$ if $G$ and $H$ are isomorphic. The \emph{degree} of $v$, denoted $d_v$ is the number of edges incident to $v$, and we say that $v$ is \emph{isolated} if $d_v=0$.

A \emph{chemical graph} is a graph whose vertices have degree at most 4. The arithmetic-geometric index $\inv(G)$ of a graph $G$ can be seen as a sum of costs on the edges of $G$. In particular, if we deal with chemical graphs, there is a limited number of possible values for the costs since they are computed from the degrees of the endpoints of the edges. Let $c_{i,j}= \frac{i + j}{2 \sqrt{ij}}$ be the cost of an edge with endpoints of degree $i$ and $j$. The  
$4 \times 4$ cost matrix  $C_{\inv}$ associated with the arithmetic-geometric index of chemical graphs is
\begin{equation*} 
C_{\inv} = 
\begin{pmatrix}
1 & \frac{3}{2\sqrt{2}} & \frac{2}{\sqrt{3}} & \frac{5}{4}\\[1.2ex]
\frac{3}{2\sqrt{2}} & 1 & \frac{5}{2\sqrt{6}} & \frac{3}{2\sqrt{2}} \\[1.2ex]
\frac{2}{\sqrt{3}} & \frac{5}{2\sqrt{6}} & 1 & \frac{7}{4\sqrt{3}} \\[1.2ex]
\frac{5}{4} & \frac{3}{2\sqrt{2}}  & \frac{7}{4\sqrt{3}}  & 1\\[1.2ex]
\end{pmatrix}
\approx
\begin{pmatrix}
1.0000  & 1.0607  & 1.1547 &  1.2500\\
1.0607  & 1.0000  & 1.0206  & 1.0607\\ 
1.1547  & 1.0206  & 1.0000  & 1.0104\\ 
1.2500  & 1.0607 &  1.0104  & 1.0000\\ 
\end{pmatrix}.
\end{equation*}

For a chemical graph $G$, let $n_i(G)$ ($i=0,\ldots,4$) be the number of vertices of degree $i$ and let $x_{i,j}(G)$ ($1\leq i\leq j\leq 4$) be the number of edges with extremities of degrees $i$ and $j$ in $G$. Then,  since $C_{\inv}$ is symmetric, we have
\[
\inv(G) = \sum_{1\leq i\leq j\leq 4}c_{i,j}\,x_{i,j}(G).
\]

In what follows, we say that a chemical graph $G$ is \emph{extremal} if $\inv(G)\geq \inv(G')$ for all chemical graphs $G'$ with the same order and the same size as $G$.

\section{Preliminaries}\label{sec:preliminaries}

We begin this section with the definition of a class of chemical graphs which, as we will see, contains most of the extremal chemical graphs of order $n$ and size $m$.

\begin{defn}
	$\mathcal{G}_{n,m}$ is the set of chemical graphs of order $n$ and size $m$, and such that $n_0(G)=0$, $n_2(G)+n_3(G)\leq 1$ and all edges have at least one endpoint of degree 4.
\end{defn}

For example, using \emph{Nauty geng} \cite{MCKAY201494} or \emph{PHOEG} \cite{PHOEG} to enumerate all chemical graphs having order $n$ and size $m$, it can be observed that there is only one graph $G_{n,m}$ in $\mathcal{G}_{n,m}$ for $(n,m)=(5,4)$, $(6,10)$, $ (7,7)$, $ (7,9)$, $ (8,9)$, $ (9,8)$, $ (9,9)$ and $(11,11)$, and there are two graphs in $G_{12,11}$, one connected and one non-connected (see also Table \ref{TableConNonCon} at the end of the paper). These graphs are shown in Figure~\ref{figExemples} and they were chosen because they will appear in the proofs of the next sections.

\begin{figure}[!htb]
	\centering\includegraphics[scale=0.34]{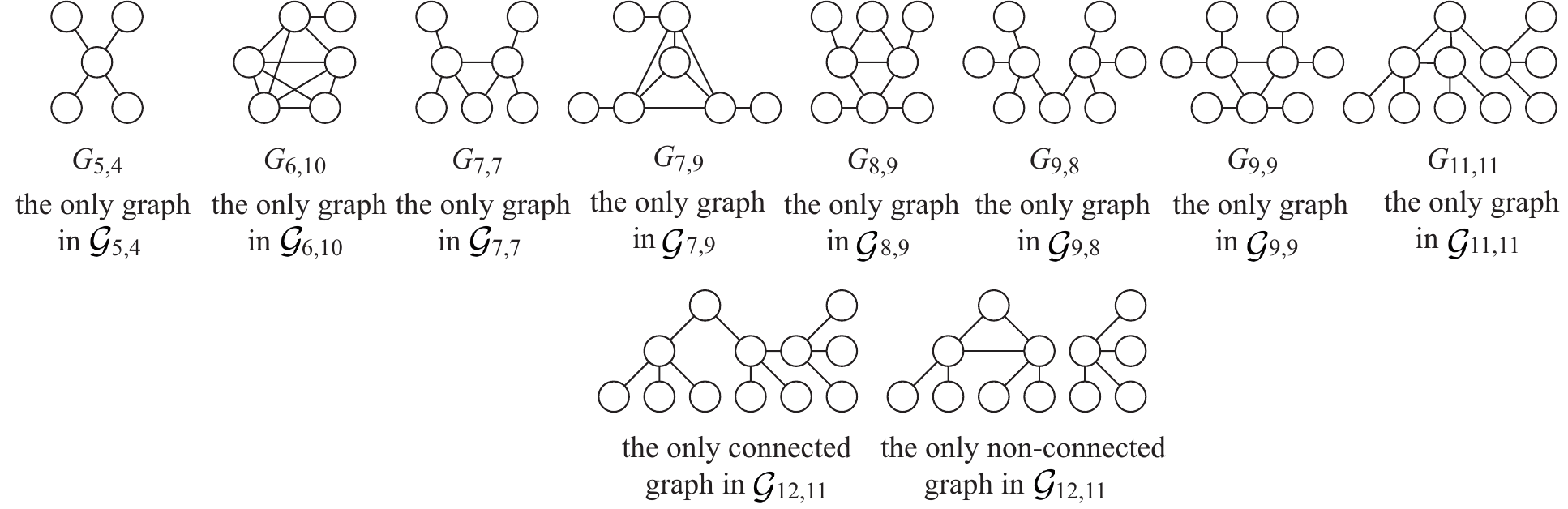}
	\vspace{-0.2cm}\caption{Examples of graphs in $\mathcal{G}_{n,m}$ for some pairs $(n,m)$.}
	\label{figExemples}
\end{figure}

If all edges of a chemical graph $G$ have at least one endpoint of degree 4, then $\inv(G)=c_{1,4}n_1(G)+2c_{2,4}n_2(G)+3c_{3,4}n_3(G)+(m-n_1(G)-2n_2(G)-3n_3(G))$ and since $2m=n_1(G)+2n_2(G)+3n_3(G)+4n_4(G)$, we have 
\begin{eqnarray*}
	\inv(G)&=&c_{1,4}n_1(G)+2c_{2,4}n_2(G)+3c_{3,4}n_3(G)+4n_4(G)-m\\[-3pt]
	& =& c_{1,4}n_1(G)+2c_{2,4}n_2(G)+3c_{3,4}n_3(G)+4n_4(G)\\
	&&-\tfrac{1}{2}(n_1(G)+2n_2(G)+3n_3(G)+4n_4(G))\\[-3pt]
	& =&\tfrac{3}{4}n_1(G)+(\tfrac{3}{\sqrt{2}}-1)n_2(G)+(\tfrac{21}{4\sqrt{3}}-\tfrac{3}{2})n_3(G)+2n_4(G).
\end{eqnarray*}
For a pair $(n,m)$ of integers, let  $T_{n,m}$ be the set of quadruplets $(t_1,t_2,t_3,t_4)$ of positive integers such that $\sum_{i=1}^4t_i=n$ and $\sum_{i=1}^4it_i=2m$. Hence, we have $2m-n=t_2+2t_3+3t_4$. Note that $T_{1,0}=\emptyset$ since $\sum_{i=1}^4t_i\leq \sum_{i=1}^4it_i$. For $(t_1,t_2,t_3,t_4)\in T_{n,m}$, let $f(t_1,t_2,t_3,t_4)$ be defined as
$$f(t_1,t_2,t_3,t_4)=\tfrac{3}{4}t_1+(\tfrac{3}{\sqrt{2}}-1)t_2+(\tfrac{21}{4\sqrt{3}}-\tfrac{3}{2})t_3+2t_4.$$
Clearly, if $G$ is a chemical graph of order $n$ and size $m$, with no isolated vertex and in which all edges have at least one endpoint of degree 4, then $(n_1(G),n_2(G),n_3(G),n_4(G))$ belongs to $T_{n,m}$ and we have observed above that  
$$\inv(G)=f(n_1(G),n_2(G),n_3(G),n_4(G)).$$

\noindent Let $(t_1,t_2,t_3,t_4)$ be a quadruplet in $T_{n,m}$ with $t_2+t_3\leq 1$:
\begin{itemize}\itemsep=-3pt
	\item if $t_2=1$ then $t_3=0$, which means that $2m-n=3t_4+1$ and $2m=t_1+2+4(n-t_1-1)$, or equivalently,  $t_1=\frac{4n-2m-2}{3}$;
	\item if $t_3=1$ then $t_2=0$, which means that $2m-n=3t_4+2$ and $2m=t_1+3+4(n-t_1-1)$, or equivalently,  $t_1=\frac{4n-2m-1}{3}$;
	\item if $t_2=t_3=0$, then $2m-n=3t_4$ and $2m=t_1+4(n-t_1)$, or equivalently, $t_1=\frac{4n-2m}{3}$.
\end{itemize}
Hence in all cases, we deduce the following property.
\begin{prope} \label{prop1}
	If $(t_1,t_2,t_3,t_4)$ is a quadruplet in $T_{n,m}$ with $t_2+t_3\leq 1$, then
	\begin{itemize}\itemsep=0pt
		\item $t_1=\lfloor \frac{4n-2m}{3}\rfloor$
		\item $t_2=\left\{\begin{array}{ll}1& \mbox{\emph{if} } 2m-n\equiv 1 \bmod{3}\\
		0&\mbox{\emph{otherwise}}\end{array}\right.$
		\item $t_3=\left\{\begin{array}{ll}1& \mbox{\emph{if} } 2m-n\equiv 2 \bmod{3}\\
		0&\mbox{\emph{otherwise}}\end{array}\right.$
		\item $t_4=\lfloor \frac{2m-n}{3}\rfloor$.
	\end{itemize}
\end{prope}

\begin{cor}\label{cor1}
	There is at most one quadruplet $(t_1,t_2,t_3,t_4)$ in $T_{n,m}$ with $t_2+t_3\leq 1$.
\end{cor}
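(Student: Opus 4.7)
The plan is to observe that Corollary \ref{cor1} is an immediate consequence of Property \ref{prop1}. Indeed, Property \ref{prop1} asserts that if a quadruplet $(t_1,t_2,t_3,t_4) \in T_{n,m}$ satisfies $t_2+t_3 \leq 1$, then each coordinate $t_i$ is explicitly determined by $n$, $m$, and the residue of $2m-n$ modulo $3$. Since this determination is unique, there can be at most one such quadruplet.

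More concretely, I would simply invoke Property \ref{prop1}: given any two quadruplets $(t_1,t_2,t_3,t_4)$ and $(t_1',t_2',t_3',t_4')$ in $T_{n,m}$ with $t_2+t_3 \leq 1$ and $t_2'+t_3' \leq 1$, Property \ref{prop1} applies to both and gives $t_i=t_i'$ for each $i\in\{1,2,3,4\}$, since the right-hand sides depend only on $n$ and $m$. Hence the two quadruplets coincide.

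There is essentially no obstacle: Property \ref{prop1} already does all the work by extracting, from the defining constraints $\sum t_i = n$ and $\sum i\,t_i = 2m$ of $T_{n,m}$ together with $t_2+t_3 \leq 1$, closed-form expressions for every $t_i$. The corollary is thus a one-line consequence, and the proof can be stated in a single sentence referring back to Property \ref{prop1}.
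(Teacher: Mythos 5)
Your proposal is correct and matches the paper's (implicit) argument exactly: Corollary \ref{cor1} is stated as an immediate consequence of Property \ref{prop1}, which pins down every coordinate $t_i$ as a function of $n$ and $m$ alone, so uniqueness follows at once.
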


\begin{cor}\label{cor2}
	If $G$ is a graph in $\mathcal{G}_{n,m}$, then
	the quadruplet $(t_1,t_2,t_3,t_4)=(n_1(G),n_2(G),n_3(G),n_4(G))$ is the unique one in $T_{n,m}$ with $t_2+t_3\leq 1$.
\end{cor}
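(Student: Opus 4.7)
The plan is to invoke Corollary \ref{cor1} directly. What needs to be checked is that the quadruplet $(n_1(G),n_2(G),n_3(G),n_4(G))$ satisfies both the membership condition for $T_{n,m}$ and the inequality $t_2+t_3\leq 1$; uniqueness then comes for free.

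First I would verify membership in $T_{n,m}$. Since $G\in\mathcal{G}_{n,m}$, the defining condition $n_0(G)=0$ guarantees that every vertex has degree in $\{1,2,3,4\}$, so counting vertices by degree gives $\sum_{i=1}^{4}n_i(G)=n$. The handshaking lemma then gives $\sum_{i=1}^{4}i\,n_i(G)=2m$. These are exactly the two equations defining $T_{n,m}$, so $(n_1(G),n_2(G),n_3(G),n_4(G))\in T_{n,m}$.

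Second, the defining condition $n_2(G)+n_3(G)\leq 1$ of $\mathcal{G}_{n,m}$ is literally the hypothesis $t_2+t_3\leq 1$ of Corollary \ref{cor1}. Applying that corollary, at most one quadruplet in $T_{n,m}$ satisfies this inequality, and since our quadruplet does, it must be the unique one.

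There is essentially no obstacle here; the statement is a direct unpacking of the definition of $\mathcal{G}_{n,m}$ combined with Corollary \ref{cor1}. The only conceptual point worth flagging is that the third condition in the definition of $\mathcal{G}_{n,m}$ (all edges have at least one endpoint of degree 4) is \emph{not} needed for this corollary; it will be used later, presumably to compute $\inv(G)$ via $f(t_1,t_2,t_3,t_4)$, but for the purely combinatorial uniqueness claim only $n_0(G)=0$ and $n_2(G)+n_3(G)\leq 1$ play a role.
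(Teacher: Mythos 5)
Your proof is correct and follows essentially the same route as the paper: check that $(n_1(G),n_2(G),n_3(G),n_4(G))$ lies in $T_{n,m}$ via the degree count and the handshaking lemma, note that $n_2(G)+n_3(G)\leq 1$ by definition of $\mathcal{G}_{n,m}$, and conclude uniqueness from Corollary \ref{cor1}. Your added remark that the edge condition in the definition of $\mathcal{G}_{n,m}$ is not needed here is accurate but not required.
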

\begin{proof} Let $G$ be a graph in $\mathcal{G}_{n,m}$. Then  $\sum_{i=1}^4n_i(G)=n$ and  $\sum_{i=1}^4in_i(G)=2m$, which means that $(t_1,t_2,t_3,t_4)=(n_1(G),n_2(G),n_3(G),n_4(G))$ is a quadruplet in $T_{n,m}$ with $t_2+t_3\leq 1$. By Corollary \ref{cor1}, it is unique.
\end{proof} 

Some connected extremal chemical graphs have all edges with at least one endpoint of degree 4, but have $n_0(G)>0$ or $n_2(G)+n_3(G)>1$. Seven examples are shown in Figure \ref{fign2n3} and we will prove that there are no other ones.

\begin{figure}[!htb]
	\centering\includegraphics[scale=0.40]{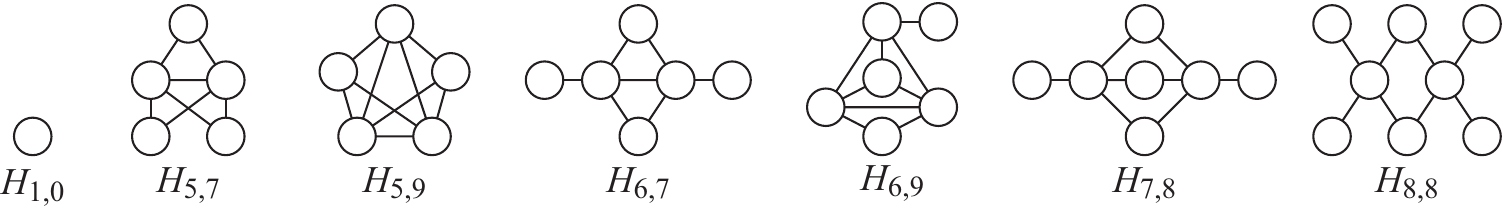}
	\vspace{-0.2cm}\caption{Seven connected  extremal chemical graphs with all edges having at least one endpoint of degree 4 and with $n_0(G)>0$ or $n_2(G)+n_3(G)>1$.}
	\label{fign2n3}
\end{figure}

Also, some connected extremal chemical graphs have at least one edge with no endpoint of degree 4. Fifteen examples are shown in Figure \ref{figxij} and we will prove that there are no other ones.

\begin{figure}[!htb]
	\centering\includegraphics[scale=0.43]{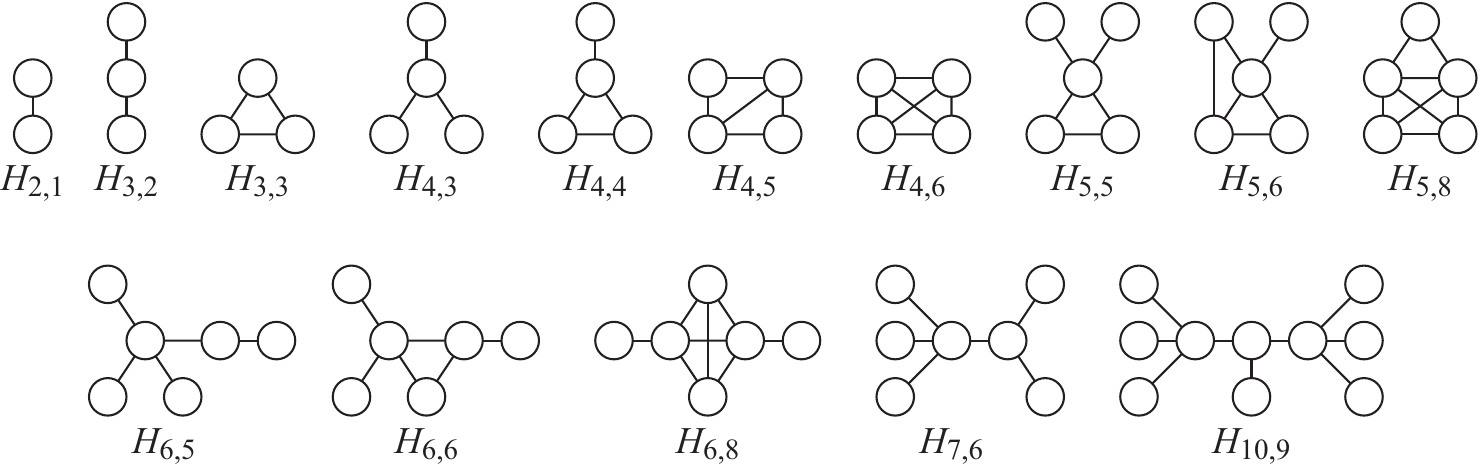}
	\vspace{-0.2cm}\caption{Fifteen connected extremal chemical graphs with at least one edge having no endpoint of degree~4.}
	\label{figxij}
\end{figure}

For each pair $(n,m)$ such that $H_{n,m}$ appears in Figure  \ref{fign2n3} or  \ref{figxij}, we can enumerate all  chemical graphs having order $n$ and size $m$, using again \emph{Nauty geng} \cite{MCKAY201494} or \emph{PHOEG} \cite{PHOEG}. Table  \ref{geng} gives the number of such graphs and it is therefore easy to verify that the following property holds.

\begin{prope}\label{22exceptions}
	The 22 graphs in Figures \ref{fign2n3} and  \ref{figxij} are the only extremal graphs of their order and size.
\end{prope}

\setlength{\tabcolsep}{3pt} 
\begin{table}[!htb]\footnotesize
	\centering
	\caption{Number $N$ of chemical graphs for some orders $n$ and sizes $m$.}
	\label{geng}
	\begin{tabular}{|c||c|c|c|c|c|c|c|c|c|c|c|c|c|c|c|c|c|c|c|c|c|c|}
		\hline
		$n$ &1&2&3&3&4&4&4&4&5&5&5&5&5&6&6&6&6&6&7&7&8&10\\
		$m$ &0&1&2&3&3&4&5&6&5&6&7&8&9&5&6&7&8&9&6&8&8&9\\
		$N$ &1&1&1&1&3&2&1&1&6&6&4&2&1&14&20&22&20&15&38&82&188&883\\
		\hline
	\end{tabular}
\end{table}

The next property relates quadruplets in $T_{n,m}$ with connected graphs in $\mathcal{G}_{n,m}$. Note that a connected chemical graph of order $n$ has $m$ edges, with $n-1\leq m\leq \min \{2n,\tfrac{n(n-1)}{2}\}
$.

\begin{prope}\label{prop:existence}
	Let $n$ and $m$ be two positive integers such that $n-1\leq m\leq \min \{2n,\tfrac{n(n-1)}{2}\}$ and $(n,m)$ is not one of the 22 pairs for which there is a graph $H_{n,m}$ in Figure  \ref{fign2n3} or \ref{figxij}. If $(t_1,t_2,t_3,t_4)\in T_{n,m}$ and $t_2+t_3\leq 1$, then $\mathcal{G}_{n,m}$ contains at least one connected graph $G$ with $n_i(G)=t_i$ ($i=1,2,3,4$).
\end{prope}
\begin{proof}
	Consider a pair $(n,m)$ of positive integers such that $n-1\leq m\leq \min \{2n,\tfrac{n(n-1)}{2}\}$ and let $(t_1,t_2,t_3,t_4)$ be any quadruplet in $T_{n,m}$. Note  that $n\geq 2$ since $T_{1,0}=\emptyset$. According to Property \ref{prop1} and Corollary \ref{cor2}, a graph in $\mathcal{G}_{n,m}$ with $n_i(G)=t_i$ ($i=1,2,3,4$) must have
	\begin{itemize}
		\itemsep=0pt
		\item $n_1(G)=\lfloor \frac{4n-2m}{3}\rfloor$
		\item $n_2(G)=\left\{\begin{array}{ll}1& \mbox{if } 2m-n\equiv 1 \bmod{3}\\
		0&\mbox{otherwise}\end{array}\right.$
		\item $n_3(G)=\left\{\begin{array}{ll}1& \mbox{if } 2m-n\equiv 2 \bmod{3}\\
		0&\mbox{otherwise}\end{array}\right.$
		\item $n_4(G)=\lfloor \frac{2m-n}{3}\rfloor$.
	\end{itemize}
	
	\noindent Moreover, in order to impose that all edges in $G$ have at least one endpoint of degree 4, we must have 
	\begin{itemize}
		\itemsep=0pt
		\item $x_{1,1}(G)=x_{1,2}(G)=x_{1,3}(G)=x_{2,2}(G)=x_{2,3}(G)=x_{3,3}(G)=0$, 
		\item $x_{1,4}(G){=}n_1(G)$, $x_{2,4}(G){=}2n_2(G)$,  $x_{3,4}(G){=}3n_3(G)$, $x_{4,4}(G){=}m{-}n_1(G){-}2n_2(G){-}3n_3(G)$.
	\end{itemize}
	\noindent The following algorithm builds such a connected graph $G$, where $V_i$ ($i=1, \ldots,4$) is the set of vertices of degree $i$ in $G$. It is illustrated in Figure \ref{fig2}.
	\begin{itemize}\itemsep=1pt
		\item[1.] Start from a graph of order $n$ and size $0$. Put $n_i(G)$ vertices in $V_i$, $i=1,\ldots,4$;
		\item[2.] if $2m-n\equiv 1 \bmod{3}$ then connect the vertex in $V_2$ to 2 vertices in $V_4$; 
		\item[3.] if $2m-n\equiv 2 \bmod{3}$ then connect the vertex in $V_3$ to 3 vertices in $V_4$;
		\item[4.] add $n_4(G){-}n_2(G){-}2n_3(G){-}1$ edges that link pairs of vertices in $V_4$ so that the graph induced by $V_2\cup V_3\cup V_4$ is a tree;
		\item[5.] add $m-n_1(G)-n_2(G)-n_3(G)-n_4(G)+1$ edges that link pairs of vertices in $V_4$ so that no vertex in $V_4$ is incident to more than 4 edges;
		\item[6.] add edges linking each vertex of $V_1$ to a vertex of $V_4$ so that every vertex in $V_4$ has degree 4.
	\end{itemize}
	Steps 2 and 3 add the required number of edges with one endpoint of degree 4 and the other  of degree 2 or 3, and Step 6 adds the required number of edges with one endpoint of degree 4 and the other of degree 1.
	Step 4 adds $n_4(G){-}n_2(G){-}2n_3(G){-}1$ edges linking pairs of vertices in $V_4$, while Step 5 adds 
	$m-n_1(G)-n_2(G)-n_3(G)-n_4(G)+1$ such edges. In total we will therefore have $m-n_1(G)-2n_2(G)-3n_3(G)$ edges linking pairs of vertices of $V_4$, which is the required number of edges with both endpoints of degree 4. It remains to prove that such a construction is always possible. For this purpose, the following constraints must be satisfied : 
	\begin{itemize}\itemsep=-2pt\item $2n_2(G)\leq n_4(G)$ and $3n_3(G)\leq n_4(G)$ to ensure that Steps 2 and 3 can be performed;
		\item $m{-}n_1(G){-}2n_2(G){-}3n_3(G)\leq \tfrac{n_4(G)(n_4(G){-}1)}{2}$ to avoid creating parallel edges in Steps 4 and 5;
		\item $n_1(G)\leq 4n_4(G)$, to ensure that Step 6 can be performed.
	\end{itemize}
	
	It is easy to check that these conditions are satisfied for all pairs $(n,m)$ with $n\leq 13$ and $n-1\leq m \leq \min \{2n,\tfrac{n(n-1)}{2}\}$, except for the 22 pairs for which we have a graph $H_{n,m}$ in Figures  \ref{fign2n3} or \ref{figxij}. So assume $n\geq 14$. We then have
	
	\begin{itemize}
		\item $n_4(G)\geq \frac{2m-n-2}{3}\geq\frac{n-4}{3}>3\geq \max\{2n_2(G),3n_3(G)\}$ (since $n_1(G)+n_2(G)\leq 1$). 
	\end{itemize}		
	\vspace{-0.5cm}\begin{align*}
		 \hspace{-0.0cm}\bullet\;\;\frac{n_4(G)(n_4(G){-}1)}{2}{-}x_{4,4}(G)&\geq \frac{1}{2}
		\Biggl(\frac{2m{-}n{-}2}{3}\left(\frac{2m{-}n{-}2}{3}{-}1\right)\Biggr)\\
		&{-}\left(m{-}\frac{4n{-}2m{-}2}{3}\right)\\
		&= \frac{4m(m-n-11)+n^2+31n-2}{18}\\
		&\geq \frac{4(n{-}1)(n{-}1{-}n{-}11){+}n^2{+}31n{-}2}{18}\\
		&=  \frac{n^2-17n+46}{18}>0.
		\end{align*}
	\begin{itemize}
		\item $4n_4(G)- n_1(G)\geq \frac{4(2m-n)}{3}-\frac{4n-2m+2}{3}=\frac{10m-8n-2}{3}\geq \frac{10(n-1)-8n-2}{3}=\frac{2n}{3}-4>0$. 
	\end{itemize}

	\vspace{0.2cm}	In summary, $G$ has the right number of edges of each type and thanks to Step 4, it is connected.
\end{proof}

The algorithm in the above proof is illustrated in Figure \ref{fig2} for $n=m=17$. In such a case, we have $n_1(G)=11$, $n_2(G)=0$, $n_3(G)=1$, $n_4(G)=5$ and $x_{4,4}(G)=3$. In Step 1, we have represented the vertices of $V_1$ with the white color, while the vertex in $V_3$ is grey and the vertices in $V_4$ are black. Step 3 links the grey vertex to 3 black vertices. Step 4 adds 2 edges between black vertices. Step 5 adds the last edge between two black vertices and Step 6 adds the edges between the white and the black vertices. Notice that Step 4 was crucial to obtain a connected graph. Indeed another set of 3 edges linking black vertices could have produced a non-connected graph in $\mathcal{G}_{n,m}$ as illustrated at the bottom right of Figure~\ref{fig2}.

\begin{figure}[!htb]
	\centering\includegraphics[scale=0.32]{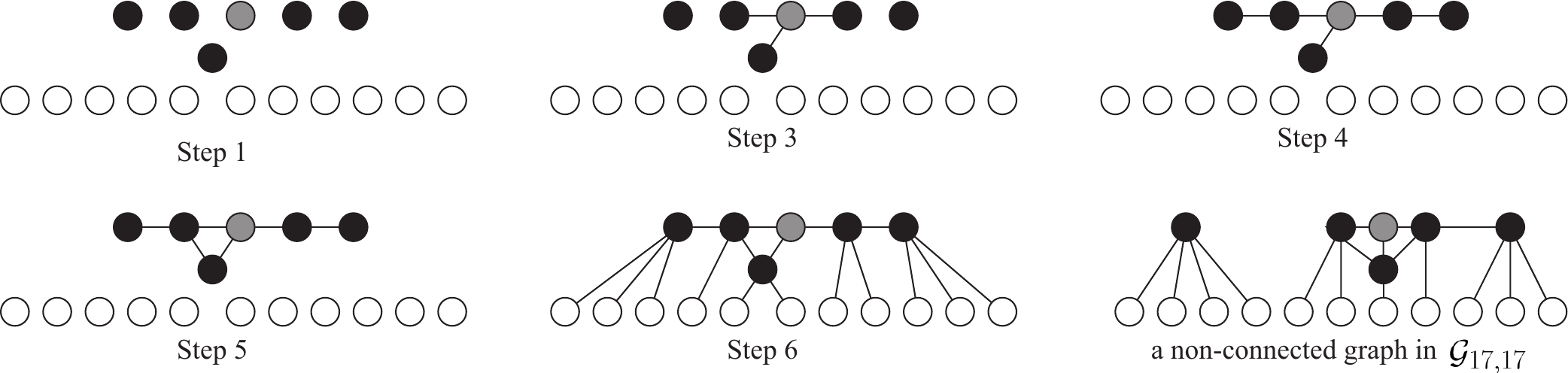}
	\caption{Illustration of the algorithm in the proof of Theorem \ref{thm1}}
	\label{fig2}
\end{figure}

\noindent The main objective of this paper is to prove the following theorem.

\begin{thm}\label{thm1}
	Let $G$ be a connected chemical graph of order $n$ and size $m$. If $G$ is extremal, then either $G$ is one of the 22 graphs $H_{n,m}$ of Figures \ref{fign2n3} and \ref{figxij}, or $G$ belongs to $\mathcal{G}_{n,m}$.
\end{thm}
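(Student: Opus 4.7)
The plan is to argue by contradiction. Suppose $G$ is an extremal connected chemical graph of order $n$ and size $m$, which is neither one of the 22 graphs $H_{n,m}$ of Figures \ref{fign2n3} and \ref{figxij} nor a member of $\mathcal{G}_{n,m}$. Since $(n,m)$ is then not one of the exceptional pairs, Property \ref{prop:existence} produces a connected graph $G^{\star}\in\mathcal{G}_{n,m}$; I want to show $\inv(G)<\inv(G^{\star})$, which will contradict the extremality of $G$.

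The first step is a reduction: I would argue that every edge of $G$ must have at least one endpoint of degree $4$. If some edge $uv$ has $d_u,d_v\leq 3$, I appeal to the structural lemmas proved in Section~\ref{sec:tools} to perform a local rewiring that strictly increases $\inv$; the pairs $(n,m)$ in which no such local improvement is possible should turn out to be exactly the 15 sporadic cases of Figure~\ref{figxij}, which are excluded by assumption. This edge-reduction step is where I expect the bulk of the combinatorial work to live, since it requires a case analysis on the neighbourhood of a ``bad'' edge and relies on having enough degree-$4$ vertices available to carry out the transformation without destroying connectedness.

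Once every edge of $G$ has a degree-$4$ endpoint, the derivation in Section~\ref{sec:preliminaries} gives $\inv(G)=f(n_1(G),n_2(G),n_3(G),n_4(G))$, and $(n_1(G),n_2(G),n_3(G),n_4(G))$ belongs to $T_{n,m}$ because the connectedness of $G$ with $n\geq 2$ forces $n_0(G)=0$. Substituting the relations $t_4=(2m-n-t_2-2t_3)/3$ and $t_1=(4n-2m-2t_2-t_3)/3$ into $f$ yields
\[
f(t_1,t_2,t_3,t_4)=\frac{2n+5m}{6}+\Bigl(\frac{3}{\sqrt{2}}-\frac{13}{6}\Bigr)t_2+\Bigl(\frac{21}{4\sqrt{3}}-\frac{37}{12}\Bigr)t_3.
\]
A direct numerical check shows that both bracketed coefficients are strictly negative, so over $T_{n,m}$ the function $f$ is strictly maximized at the unique quadruplet with $t_2+t_3\leq 1$ supplied by Property~\ref{prop1}; in particular $\inv(G^{\star})$ equals this maximum.

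The conclusion then falls out: extremality of $G$ forces $\inv(G)=\inv(G^{\star})$, so $(n_1(G),n_2(G),n_3(G),n_4(G))$ must coincide with the unique quadruplet in $T_{n,m}$ having $t_2+t_3\leq 1$ (using Corollary~\ref{cor1}). Combined with $n_0(G)=0$ and the fact that every edge has a degree-$4$ endpoint, this places $G$ in $\mathcal{G}_{n,m}$, contradicting the hypothesis. The main obstacle, as noted above, is the first step: ruling out edges whose endpoints both have degree at most $3$ outside the 22 sporadic $(n,m)$ pairs. Once the Section~\ref{sec:tools} lemmas handle that case analysis, the rest of the argument is purely an algebraic optimization of $f$ on $T_{n,m}$ together with the existence statement of Property~\ref{prop:existence}.
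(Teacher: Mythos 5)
Your proposal follows essentially the same route as the paper: defer the elimination of edges with both endpoints of degree at most $3$ to the lemmas of Section~\ref{sec:tools}, then optimize $f$ over $T_{n,m}$ and invoke Property~\ref{prop:existence} to realize the optimum by a connected graph in $\mathcal{G}_{n,m}$. Your closed form $f=\frac{2n+5m}{6}+\bigl(\frac{3}{\sqrt 2}-\frac{13}{6}\bigr)t_2+\bigl(\frac{21}{4\sqrt 3}-\frac{37}{12}\bigr)t_3$ is correct and packages more cleanly what the paper does with three explicit exchange moves on quadruplets. There is, however, one inference that does not hold as written: ``both coefficients are strictly negative, so $f$ is maximized at the unique quadruplet with $t_2+t_3\leq 1$.'' Because of the congruence $t_2+2t_3\equiv 2m-n \pmod 3$ you cannot always take $t_2=t_3=0$, and negativity of the coefficients alone does not decide, for instance, whether $(t_2,t_3)=(2,0)$ beats $(0,1)$ when $2m-n\equiv 2\pmod 3$, or whether $(0,2)$ beats $(1,0)$ when $2m-n\equiv 1\pmod 3$. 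You additionally need $2\bigl|\frac{3}{\sqrt2}-\frac{13}{6}\bigr|>\bigl|\frac{21}{4\sqrt3}-\frac{37}{12}\bigr|$ and $2\bigl|\frac{21}{4\sqrt3}-\frac{37}{12}\bigr|>\bigl|\frac{3}{\sqrt2}-\frac{13}{6}\bigr|$; these are exactly the paper's exchange inequalities $c_{1,4}-4c_{2,4}+3c_{3,4}>0$ and $2c_{2,4}-6c_{3,4}+4>0$, and they do hold numerically, so the gap is real but easily filled. Everything else (using connectedness to get $n_0(G)=0$, checking that a connected graph satisfies the hypotheses $n-1\leq m\leq\min\{2n,\frac{n(n-1)}{2}\}$ of Property~\ref{prop:existence}, and concluding via Corollary~\ref{cor1}) matches the paper's argument.
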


\noindent To prove this theorem, we need some tools which are given in the next section.
\section{Tools}\label{sec:tools}

\begin{lem}\label{lem_rotation}
	Let $G$ be a connected extremal chemical graph. Assume that $G$ has a vertex $u$ of degree 2 where $v$ and $w$ are its two neighbors.
	\vspace{-6pt}\begin{itemize}[itemsep=0pt]
		\item[(a)] If $v$ and $w$ are nonadjacent, then none of them has degree 3. 
		\item[(b)] If $v$ and $w$ are adjacent, $d_v{\geq} 3$ and $d_w{\leq} 3$, then no vertex nonadjacent to $w$ has degree 2 or 3.
	\end{itemize}
\end{lem}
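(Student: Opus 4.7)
The plan is to prove each part by contradiction: assuming the stated property fails, one constructs a chemical graph $G'$ of the same order and size as $G$ with $\inv(G') > \inv(G)$, contradicting extremality. In both cases the modification is a local edge operation (a ``rotation'') that respects the chemical constraint $d \leq 4$ and creates no parallel edge, thanks to the nonadjacency hypotheses.

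For part (a), by the symmetry of $v$ and $w$ in the statement I would assume $d_w = 3$, and let $b_1, b_2$ denote the two neighbors of $w$ other than $u$ (both distinct from $v$ since $v \not\sim w$). The operation is: delete $uv$ and add $vw$. After this, $d_u$ drops from $2$ to $1$, $d_v$ is unchanged (one incident edge lost, one gained), and $d_w$ rises from $3$ to $4$, so $G'$ is a chemical graph of the required order and size. Tracking the edges whose cost changes ($uv$ deleted, $vw$ added, the surviving $uw$ whose both endpoints changed degree, and $wb_1, wb_2$ whose endpoint $w$ changed degree) yields
\[
\Delta\inv = (c_{1,4} - c_{2,3}) + (c_{d_v,4} - c_{2,d_v}) + \sum_{i=1,2}(c_{4,d_{b_i}} - c_{3,d_{b_i}}).
\]
The leading term $c_{1,4} - c_{2,3} \approx 0.229$ dominates, while each of the other three terms has absolute value at most $|c_{4,4} - c_{2,4}| \approx 0.061$, so a routine check of the finitely many values $(d_v, d_{b_1}, d_{b_2}) \in \{1,\ldots,4\}^3$ against the matrix $C_{\inv}$ gives $\Delta\inv > 0$ in every case.

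For part (b), suppose there exists $z \not\sim w$ with $d_z \in \{2,3\}$; necessarily $z \notin \{u,v,w\}$. The operation is: delete $uw$ and add $zw$. This is valid since $z \not\sim w$ and $d_z + 1 \leq 4$, and it leaves all degrees unchanged except $d_u \to 1$ and $d_z \to d_z + 1$ (in particular $v \sim w$ still holds, and $uv$ is retained). Accounting for the affected edges yields
\[
\Delta\inv = (c_{1,d_v} - c_{2,d_v}) + (c_{d_z+1,d_w} - c_{2,d_w}) + \sum_{y \sim z}(c_{d_z+1,d_y} - c_{d_z,d_y}).
\]
Since $d_v \geq 3$ the leading term $c_{1,d_v} - c_{2,d_v}$ is at least $c_{1,3} - c_{2,3} \approx 0.134$ (rising to $c_{1,4} - c_{2,4} \approx 0.189$ when $d_v = 4$), and a finite case check over $d_w \in \{2,3\}$, $d_z \in \{2,3\}$, and the degrees $d_y \in \{1,\ldots,4\}$ of the at most three neighbors of $z$ confirms $\Delta\inv > 0$.

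The main obstacle is computational rather than conceptual: in each part one must verify that a specific linear combination of entries of $C_{\inv}$ is strictly positive across all finitely many relevant degree configurations. The tightest cases are those in which the side-neighbor degrees ($d_v$ and the $d_{b_i}$ in (a); $d_z$ and the $d_y$ in (b)) are all maximal, since then the differences involving the fourth row of $C_{\inv}$ become small or even slightly negative; but the dominant positive term always absorbs the loss. Note that extremality is defined against all chemical graphs of the given order and size, so $G'$ need not remain connected to yield the contradiction, although in fact both rotations preserve connectedness via the alternate paths $u$--$w$--$v$ in (a) and $u$--$v$--$w$ in (b).
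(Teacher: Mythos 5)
Your proposal is correct and follows essentially the same route as the paper: both parts use the same edge rotation (detach $u$ from its non-target neighbor and reattach that neighbor, resp.\ the degree-2-or-3 vertex, to create the new edge), the same bookkeeping of which edge costs change, and the same finite check over degree configurations, yielding exactly the paper's lower bounds ($\approx 0.1479$ in (a) and $\approx 0.0128$ in (b)). The only cosmetic differences are the labelling of the degree-3 vertex in (a) and your slightly loose remark that each correction term in (a) has absolute value at most $0.061$ (the term $c_{1,4}-c_{1,3}\approx 0.095$ exceeds this, but it is positive, so the conclusion is unaffected).
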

\begin{proof}$ $
	\vspace{-6pt}\begin{itemize}[itemsep=0pt]
		\item[(a)] Assume that $v$ and $w$ are nonadjacent, and that one of them, say $v$, has degree 3. Let $G'$ be the graph obtained from $G$ by replacing $uw$ with $vw$. Then, with $i=d_u$ and $j=d_x$, where $x$ is any neighbor of $v$ other than $u$, we have
		\begin{align*}
		\inv(G') {-} \inv(G) &\geq c_{1,4} -c_{2,3} + \min_{i = 1, \ldots 4} (c_{4,i} - c_{2,i}) +2 \min_{j = 1, \ldots 4} (c_{4,j} {-} c_{3,j}) \\&\approx 0.1479 > 0.\end{align*}
		\vspace{-20pt}\item[(b)]  Assume that $v$ and $w$ are adjacent with $d_v\geq 3$ and $d_w\leq 3$, and let $x$ be a vertex nonadjacent to $w$ such that $d_x=2$ or 3. Let $G'$ be the graph obtained from $G$ by replacing $uw$ with $xw$. Then, with $i=d_v$, $j=d_w$, $k=d_x$ and $\ell=d_y$, where $y$ is any neighbor of $x$, we have 
		
		\vspace{-25pt}\begin{align*}
			\inv(G') {-} \inv(G) \;{\geq} & \;\min_{i = 3, 4}(c_{1,i} -c_{2,i})\\
			& \;{+} 
			\min_{k = 2, 3}{\bigg(}\min_{j = 2, 3}(c_{j,k{+}1}{-}c_{j,2}){+}k\min_{\ell = 1, \ldots, 4} (c_{\ell,k{+}1}{-}c_{\ell,k}){\bigg)}\\          
		\;	{\approx}&\;  0.0128 >0.
		\end{align*}
	\end{itemize}
	In both cases, $G'$ is connected and  $\inv(G')>\inv(G)$, which means that $G$ is not an extremal, connected chemical graph, a contradiction.
\end{proof}

\begin{lem}\label{lem_2opt}
	A connected extremal chemical graph does not contain a chain $v_1,v_2,\ldots,v_r$ as partial subgraph with $v_1$ nonadjacent to $v_{r-1}$ and $v_2$ nonadjacent to $v_r$ in $G$ and with $d_{v_1}<d_{v_r}$, $d_{v_2}\leq 3$, and $d_{v_{r-1}}=4$. 
\end{lem}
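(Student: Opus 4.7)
The plan is to construct a new graph $G'$ from $G$ by the $2$-opt switch that removes the edges $v_1v_2$ and $v_{r-1}v_r$ and inserts the edges $v_1v_{r-1}$ and $v_2v_r$. First I would observe that $d_{v_2}\leq 3<4=d_{v_{r-1}}$ forces $v_2\neq v_{r-1}$, so $r\geq 4$; the hypothesis that $v_1$ is nonadjacent to $v_{r-1}$ and $v_2$ nonadjacent to $v_r$ ensures that the two inserted edges are not already present, so $G'$ is simple. Each of the four vertices $v_1,v_2,v_{r-1},v_r$ loses exactly one incident edge and gains exactly one, so all vertex degrees (and hence the chemical-graph property, the order and the size) are preserved.

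Next I would verify that $G'$ is connected. The subchain $v_2,v_3,\ldots,v_{r-1}$, which contains at least the edge $v_2v_3$ since $r\geq 4$, is untouched by the switch, so together with the two new edges $v_1v_{r-1}$ and $v_2v_r$ it produces the path $v_1,v_{r-1},v_{r-2},\ldots,v_3,v_2,v_r$ in $G'$. Any walk in $G$ that used one of the removed edges can thus be rerouted through this path, so $G'$ remains connected.

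Only the four swapped edges contribute to the change in $\inv$. Writing $a=d_{v_1}$, $b=d_{v_2}$ and $e=d_{v_r}$, one gets
\[
\inv(G')-\inv(G)=c_{a,4}+c_{b,e}-c_{a,b}-c_{4,e},
\]
and the hypotheses give $1\leq a<e\leq 4$ and $1\leq b\leq 3$. It remains to show this quantity is strictly positive: the resulting strict inequality $\inv(G')>\inv(G)$ contradicts the extremality of $G$ and finishes the proof.

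The main obstacle is precisely this positivity. The natural splitting $(c_{a,4}-c_{a,b})+(c_{b,e}-c_{4,e})$ is not termwise nonnegative---one reads from $C_{\inv}$ that $c_{3,4}<c_{3,1}$ and $c_{2,2}<c_{4,2}$, so each summand can be negative---which means that a separate-minima bound in the style of Lemma~\ref{lem_rotation} is unavailable, and the inequality is genuinely joint in $(a,b,e)$. I would therefore exhaust the (at most) $18$ admissible triples directly, reading the values of $c_{i,j}$ off $C_{\inv}$ and confirming positivity case by case; the worst case is $(a,b,e)=(3,3,4)$, where the difference equals $2c_{3,4}-c_{3,3}-c_{4,4}=\tfrac{7}{2\sqrt{3}}-2>0$.
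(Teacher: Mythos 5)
Your proof is correct and is essentially the paper's own argument: the same $2$-opt switch replacing $v_1v_2$ and $v_{r-1}v_r$ by $v_1v_{r-1}$ and $v_2v_r$, the same difference $c_{a,4}+c_{b,e}-c_{a,b}-c_{4,e}$, and the same verification over all admissible triples, with the identical worst case $\tfrac{7}{2\sqrt{3}}-2\approx 0.0207$. The only (harmless) deviation is that you allow $b=1$, whereas the paper restricts to $b\in\{2,3\}$ since $v_2$ is an interior vertex of the chain and hence has degree at least $2$; your added explicit checks of simplicity and connectivity of $G'$ are details the paper leaves implicit.
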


\begin{proof}
	Let $G'$ be the graph obtained from $G$ by replacing the edges $v_1,v_2$ and $v_{r-1},v_r$ by $v_1,v_{r-1}$ and $v_2,v_r$. Then, with $d_{v_1}=i$, $d_{v_2}=j$ and $d_{v_r}=k$, we have
	$$
	\inv(G') - \inv(G) \geq \min_{i = 1,2,3} \;\;\min_{j = 2,3}\;\;\min_{k = i+1,\ldots,4}(c_{i,4} + c_{j,k} - c_{i,j}-c_{4,k}) \approx 0.0207 > 0.$$
	
	Since $G'$ is connected and  $\inv(G')>\inv(G)$, this means that $G$ is not an extremal, connected chemical graph, a contradiction.
\end{proof}

The next lemmas have a label $(i,j)$ with $i<j$ to indicate that they state that a connected extremal chemical graph $G$ has $x_{i,j}(G)=0$, with a few exceptions.

\begin{lemm}[\bf 1,1]
	The only connected extremal chemical graph $G$ with $x_{1,1}(G)>0$ is $H_{2,1}$.
\end{lemm}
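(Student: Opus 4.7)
The plan here is essentially immediate from the connectivity hypothesis, without needing any of the rotation/2-opt machinery used for the harder lemmas. The key observation is that an edge contributing to $x_{1,1}(G)$ cannot be "attached" to anything else in the graph.

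First I would suppose $G$ is a connected extremal chemical graph with $x_{1,1}(G) > 0$, and pick an edge $uv$ with $d_u = d_v = 1$. Since $d_u = 1$, the only edge incident to $u$ is $uv$, and similarly for $v$. Therefore no vertex outside $\{u, v\}$ is adjacent to either endpoint, so the pair $\{u, v\}$ together with the edge $uv$ forms a connected component of $G$. Because $G$ is connected, this component must be all of $G$, which means $n = 2$, $m = 1$ and $G \simeq H_{2,1}$.

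Conversely, $H_{2,1}$ is the unique chemical graph of order $2$ and size $1$ (there is nothing to compare it to), so it is trivially extremal. The main "obstacle" is really just noticing that nothing deeper is needed: connectivity alone forces the conclusion, and extremality enters only to confirm that $H_{2,1}$ itself qualifies as the single exception listed in the statement.
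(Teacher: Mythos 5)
Your argument is correct and is essentially identical to the paper's proof: connectivity forces the two adjacent degree-one vertices to constitute the entire graph, so $G \simeq H_{2,1}$. The extra remark that $H_{2,1}$ is trivially extremal (being the unique chemical graph of its order and size) is fine but not needed for the statement as written.
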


\begin{proof}
	Let $G$ be a connected extremal  chemical graph with two adjacent vertices of degree 1. Since $G$ is connected, it does not contain any other vertex, which means that $G \simeq H_{2,1}$. 
\end{proof}

\begin{lemm}[\bf 2,2]
	The only connected extremal  chemical graphs $G$ with\linebreak$x_{2,2}(G){>}0$ are $H_{3,3}$, $H_{4,4}$ and $H_{5,5}$.
\end{lemm}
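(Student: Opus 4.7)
My plan is to fix an edge $uv$ with $d_u = d_v = 2$ in a connected extremal chemical graph $G$, denote by $u'$ and $v'$ the remaining neighbors of $u$ and $v$ (possibly equal), and constrain $G$ step by step using the structural lemmas. A first observation is that the only way $v$ can be adjacent to $u'$ is $u' = v'$, since $v$'s only neighbors are $u$ and $v'$. Hence whenever $u' \neq v'$, Lemma~\ref{lem_rotation}(a) applied at $u$ forces $d_{u'} \neq 3$, so $d_{u'} \in \{1, 2, 4\}$, and symmetrically $d_{v'} \in \{1, 2, 4\}$. The argument splits on the dichotomy $u' = v'$ vs.\ $u' \neq v'$.

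\textbf{Triangle case ($u' = v' =: w$).} If $d_w = 2$, the triangle $\{u, v, w\}$ exhausts the connected component and $G \simeq H_{3,3}$. Otherwise $d_w \geq 3$, and applying Lemma~\ref{lem_rotation}(b) at $u$ and at $v$ (in each case the two neighbors of the degree-$2$ vertex are adjacent and satisfy the degree constraints $\leq 3$/$\geq 3$) yields that every vertex in $V(G) \setminus \{u, v, w\}$ has degree $1$ or $4$. I then split on $d_w \in \{3, 4\}$ and on the degrees of $w$'s one or two remaining neighbors, obtaining $H_{4,4}$ (when $d_w = 3$ with a leaf attached) and $H_{5,5}$ (when $d_w = 4$ with two leaves attached) as the only extremal candidates. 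In every other sub-case, $G$ contains a degree-$4$ vertex $y$ outside the triangle; if $y$ has a neighbor $z$ outside the triangle with $d_z\geq 3$, a chain of the form $u, v, w, y, z$ satisfies the hypotheses of Lemma~\ref{lem_2opt} and yields an improving swap, while otherwise I would compare $\inv(G)$ against the value $f(n_1, n_2, n_3, n_4)$ of a graph in $\mathcal{G}_{n,m}$ produced by Property~\ref{prop:existence}.

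\textbf{Non-triangle case ($u' \neq v'$).} Here $d_{u'}, d_{v'} \in \{1, 2, 4\}$. If $u' \sim v'$, then $\{u, v, v', u'\}$ is a $4$-cycle with $d_{u'}, d_{v'} \geq 2$. The sub-case $d_{u'} = d_{v'} = 2$ gives $G \simeq C_4$, not extremal because $\inv(C_4) = 4 < \inv(H_{4,4})$. The sub-case $d_{u'} = d_{v'} = 4$ admits Lemma~\ref{lem_2opt} directly on the chain $u, v, v', u'$, whose nonadjacency hypotheses $u \not\sim v'$ and $v \not\sim u'$ follow from $u' \neq v'$. The mixed sub-case (say $d_{u'} = 4$, $d_{v'} = 2$) is handled by a direct 2-opt swap of $uv$ with an edge $u' a$ for a non-$4$-cycle neighbor $a$ of $u'$: the cost change computed from $C_{\inv}$ is positive whenever $d_a \in \{3, 4\}$, and the residual configurations in which all additional neighbors of $u'$ have degree $1$ or $2$ force $G$ to be small and are ruled out by direct comparison. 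If $u' \not\sim v'$, I extend the path $u' u v v'$ through degree-$2$ vertices, iteratively invoking Lemma~\ref{lem_rotation}(a) to constrain each new vertex to degree in $\{1, 2, 4\}$; the walk must eventually close into $G \simeq C_n$, terminate in two leaves so $G \simeq P_n$, or reach a degree-$4$ vertex. The first case (necessarily $n \geq 4$) is excluded by comparison with a triangle-plus-pendant-path graph of order and size $n$; the second by comparison with a spider centered at a degree-$4$ vertex; the third permits a direct application of Lemma~\ref{lem_2opt} to a chain beginning $u, v$ and ending at a suitable further neighbor of that degree-$4$ vertex.

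The main obstacle I foresee is the triangle sub-case with $d_w = 4$ and at least one degree-$4$ extra neighbor of $w$: the natural chains starting at $uv$ include the triangle edges $u u'$ or $v u'$, which violate the nonadjacency hypotheses of Lemma~\ref{lem_2opt}. For those residual configurations I expect to rely on explicit comparison with the graph in $\mathcal{G}_{n, m}$ built via Property~\ref{prop:existence} whenever $\mathcal{G}_{n, m}$ is nonempty, and on the enumeration summarized in Table~\ref{geng} to settle the finitely many small $(n, m)$ pairs where $\mathcal{G}_{n, m}$ is empty.
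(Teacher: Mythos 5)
Your triangle case ($u'=v'=w$) tracks the paper's argument: Lemma \ref{lem_rotation}(b) forces every vertex outside the triangle to have degree 1 or 4, a chain $u,v,w,x,y$ handles any degree-4 neighbour $x$ of $w$ that itself has a degree-4 neighbour, and the few residual configurations are finite graphs settled by explicit comparison (the paper compares with $G_{7,7}$, $H_{8,8}$ and $G_{11,11}$). Your worry about ``triangle edges inside the chain'' is unfounded: Lemma \ref{lem_2opt} only requires nonadjacency between $v_1,v_{r-1}$ and between $v_2,v_r$, which holds automatically here because $u$ and $v$ have degree 2 and all their neighbours lie in the triangle.

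The genuine gap is in the case $u'\ne v'$. The paper disposes of it in one stroke: replace the edge $u'u$ by $u'v$, so that $u$ becomes a leaf and $v$ a degree-3 vertex; the change in $\inv$ is at least $c_{1,3}-c_{2,2}+2\min_{i}(c_{3,i}-c_{2,i})\approx 0.0541>0$, independently of the degrees of $u'$ and $v'$ and of whether $u'\sim v'$, and connectivity is preserved. (The rotation is legal precisely because $u'\ne v'$, i.e.\ $u'$ is not already adjacent to $v$.) You missed this uniform move and replaced it with a case analysis several branches of which do not close. Concretely: when your extended degree-2 path reaches a degree-4 vertex all of whose remaining neighbours have degree at most 2, the chain ``beginning $u,v$'' has $d_{v_1}=2\ge d_{v_r}$, violating the hypothesis $d_{v_1}<d_{v_r}$ of Lemma \ref{lem_2opt}, so that branch fails as stated; the claim that the mixed sub-case ``forces $G$ to be small'' is unjustified, since $u'$ may have degree-2 neighbours continuing into arbitrarily long paths; and the positivity of your ad hoc swap of $uv$ with $u'a$ is asserted but never verified against $C_{\inv}$. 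None of these branches is needed once the single rotation above is in place.
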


\begin{proof}
	Let $u$ and $v$ be two adjacent vertices of degree 2 in a connected extremal  chemical graph $G$.
	\begin{itemize}
		\item Assume that $u$ and $v$ have a  common neighbor $w$. If $w$ has degree 2, then $G \simeq H_{3,3}$. So suppose $w$ has degree at least 3. We know from Lemma \ref{lem_rotation}(b) that  all other vertices in the graph have degree 1 or 4. If they have all degree 1, then $G \simeq H_{4,4}$ (if $w$ has degree 3) or $G \simeq H_{5,5}$ (if $w$ has degree 4). So assume $w$ is adjacent to a vertex $x$ of degree 4. If $x$ is adjacent to a vertex $y \neq w$ of degree 4, then Lemma \ref{lem_2opt} with the partial chain $u,v,w,x,y$ contradicts the fact that $G$ is a connected extremal  chemical graph.
		Hence, all neighbors $y\neq w$ of $x$ have degree 1. Similarly, if $w$ has a second neighbor $z$ of degree 4, then all neighbors of $z$, except $w$, have degree 1. There are therefore only three possible cases:
		\begin{itemize}
			\item if $w$ has degree 3 then $\inv(G)\approx7.80<8.12\approx\inv(G_{7,7})$ (see Figure \ref{figExemples});
			\item if $w$ has degree 4 and a neighbor of degree 1, then $\inv(G)\approx9.12<9.24\approx\inv(H_{8,8})$ (see Figure \ref{fign2n3});
			\item if $w$ has degree 4 and a second neighbor of degree 4, then 
			$\inv(G)\approx12.62<12.78\approx\inv(G_{11,11})$ (see Figure \ref{figExemples});
		\end{itemize}
		In all cases $G$ is not a connected  extremal  chemical graph, a contradiction.
		\item Assume that $u$ and $v$ have no common neighbor. Let $x \neq v$ (resp. $y \neq u$) be the second neighbor of $u$ (resp. $v$). Let $G'$ be the graph obtained from $G$ by replacing $xu$ with $xv$. Then, with $i=d_x$ and $j=d_y$, we have
		\vspace{-0.3cm}\begin{align*}
		\inv(G') - \inv(G) &\ge c_{1,3} - c_{2,2} + \min_{i = 1, \ldots 4} (c_{3,i} - c_{2,i}) + \min_{j = 1, \ldots 4} (c_{3,j} - c_{2,j})\\& \approx 0.0541 > 0.
		\end{align*}
		Hence, $G$ is not extremal, a contradiction.\qedhere
	\end{itemize}
\end{proof}

\begin{lemm}[\bf 1,2]
	The only connected extremal  chemical graphs $G$ with $x_{1,2}(G)>0$ are $H_{3,2}$ and $H_{6,5}$.
\end{lemm}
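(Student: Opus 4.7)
The plan is to fix an edge $uv$ with $d_u=1$ and $d_v=2$ (such an edge exists because $x_{1,2}(G)>0$), let $w$ be the second neighbor of $v$, and then split on $d_w\in\{1,2,3,4\}$. Three of the four subcases collapse almost immediately, and only $d_w=4$ produces a genuine extremal graph.

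If $d_w=1$, then connectedness of $G$ forces the whole graph to be the path $u\text{--}v\text{--}w$, so $G\simeq H_{3,2}$. If $d_w=2$, then $vw$ is an edge between two degree-2 vertices, hence $x_{2,2}(G)>0$; by the already-proved Lemma (2,2), $G$ must be one of $H_{3,3}$, $H_{4,4}$, $H_{5,5}$. A direct inspection of these three graphs shows that none of them contains a degree-1 vertex adjacent to a degree-2 vertex, so this subcase yields no extremal graph satisfying the hypothesis. If $d_w=3$, I would apply Lemma~\ref{lem_rotation}(a) with $v$ as the degree-2 vertex: its two neighbors $u$ and $w$ are nonadjacent (because $d_u=1$), yet one of them, $w$, has degree 3, contradicting the lemma.

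The substantive case is $d_w=4$. Here the plan is to apply Lemma~\ref{lem_2opt} to the four-vertex chain $u,v,w,x$ for an arbitrary neighbor $x\neq v$ of $w$. All hypotheses are easy to verify: $u$ is nonadjacent to $w$ since $d_u=1$; $v$ is nonadjacent to $x$ since both of $v$'s edges are already consumed by $u$ and $w$, and $x\notin\{u,v\}$; furthermore $d_v=2\leq 3$ and $d_w=4$. So if $d_x\geq 2$, Lemma~\ref{lem_2opt} would give $d_u<d_x$ and contradict the extremality of $G$. Hence every neighbor of $w$ other than $v$ must have degree 1, which means $w$ has exactly three pendant neighbors besides $v$, forcing $G\simeq H_{6,5}$.

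The argument is largely mechanical once the case split is set up; the only point that requires mild care is confirming that the chain $u,v,w,x$ really meets the nonadjacency requirements of Lemma~\ref{lem_2opt}, but this follows directly from $d_u=1$ and $d_v=2$. No new numerical estimates are needed beyond those already packaged in the earlier lemmas.
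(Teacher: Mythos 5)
Your proof is correct and follows essentially the same route as the paper: the same case split on $d_w$, with Lemma (2,2) eliminating $d_w=2$, Lemma \ref{lem_rotation}(a) eliminating $d_w=3$, and Lemma \ref{lem_2opt} applied to the chain $u,v,w,x$ forcing all other neighbors of $w$ to have degree 1 when $d_w=4$. The only difference is that you make explicit the inspection of $H_{3,3}$, $H_{4,4}$, $H_{5,5}$ in the $d_w=2$ case, which the paper leaves implicit.
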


\begin{proof}
	Let $G$ be a connected extremal  chemical graph with two adjacent vertices $u$ and $v$ such that $d_u=1$ and $d_v=2$, and let $w$ be the other neighbor of $v$. If $w$ has degree 1 then $G \simeq H_{3,2}$. We know from Lemma (2,2) that $w$ does not have degree 2, and from Lemma \ref{lem_rotation}(a) that $w$ does not have degree 3. Hence, $d_w=4$. If $w$ has three neighbors of degree 1, then $G \simeq H_{6,5}$; otherwise, $w$ has a neighbor $x\neq v$ of degree at least 2 and Lemma \ref{lem_2opt} with the partial chain $u,v,w,x$ contradicts the fact that $G$ is a connected extremal chemical graph.   
\end{proof}

\begin{lemm}[\bf 3,3]
	The only connected extremal  chemical graphs $G$ with\linebreak $x_{3,3}(G)>0$ are $H_{4,5}$, $H_{4,6}$, $H_{5,8}$ and $H_{6,8}$.
\end{lemm}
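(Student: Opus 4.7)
The plan is to follow the template of the proof of Lemma (2,2). Let $G$ be a connected extremal chemical graph containing an edge $uv$ with $d_u=d_v=3$; each of $u$ and $v$ has two further neighbors, so I would split the argument according to the number $k\in\{0,1,2\}$ of common neighbors of $u$ and $v$. In every case the target is either to conclude that $G$ is one of $H_{4,5}$, $H_{4,6}$, $H_{5,8}$, $H_{6,8}$, or to exhibit a connected modification $G'$ with $\inv(G')>\inv(G)$, contradicting extremality. Before entering the case analysis I would invoke Lemmas (1,1), (1,2), (2,2) to forbid further low-degree pairs incident to the local neighborhood of $uv$, and use Lemma \ref{lem_rotation} to restrict the neighbors of any residual degree-2 vertex.

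When $k=2$, writing $a,b$ for the two common neighbors, the set $\{u,v,a,b\}$ induces $K_4-e$ if $a\not\sim b$ and $K_4$ if $a\sim b$. If $|V(G)|=4$ then $G\simeq H_{4,5}$ or $G\simeq H_{4,6}$; otherwise the graph extends only through $a$ or $b$ (since $u$ and $v$ already have degree 3), forcing at least one of them to have a neighbor $x\notin\{u,v,a,b\}$. I would then pick any neighbor $y$ of $x$ and apply Lemma \ref{lem_2opt} to a chain built from $u,a,x,y$, after verifying the non-adjacencies and degree conditions demanded by the lemma, to contradict extremality.

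When $k=1$, let $w$ denote the common neighbor and $a,b$ the other neighbors of $u,v$. The previously established lemmas pin down the degrees of $a,b$ and their neighbors to a short list, and I would then apply Lemma \ref{lem_2opt} or an ad hoc rotation (as in the second bullet of Lemma (2,2)) whenever the graph admits a \emph{free} neighbor of degree at least $3$ outside the cluster spanned by $\{u,v,w,a,b\}$. The residual bounded-size candidates are enumerated via \emph{Nauty geng} (see Table \ref{geng}) and their $\inv$-values compared with those of $H_{5,8}$ and $H_{6,8}$. When $k=0$, the six vertices $\{u,v\}\cup N(u)\cup N(v)$ are distinct, and I would swap the edges $uv$ and a suitably chosen edge $b_1 c$ (with $c\in N(b_1)\setminus\{v\}$) for the pair $ub_1,vc$; using $c_{3,3}=1$ together with the gap $c_{3,4}-c_{3,3}\approx 0.0104$ and the larger gaps to entries of row 1 of $C_{\inv}$, the minimum of $\inv(G')-\inv(G)$ over admissible degree choices should be strictly positive in all configurations, with the rare edge cases (where the swap fails to be possible or to preserve connectedness) reduced to an inspection handled by Lemma \ref{lem_2opt}.

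The hard part will be case $k=1$ with $d_w=4$: here the possible topologies of $G$ are much richer, and ruling out every extension beyond $H_{5,8}$ and $H_{6,8}$ needs iterated applications of Lemma \ref{lem_2opt} along several different chains together with the uniqueness statement of Corollary \ref{cor1}, in order to show that the finite collection of small cases listed in Table \ref{geng} captures all remaining extremal candidates.
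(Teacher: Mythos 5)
Your three-way split on the number $k$ of common neighbours of $u$ and $v$ is a reasonable starting point, but each branch as sketched has a concrete failure. For $k=0$ your proposed $2$-swap (delete $uv$ and $b_1c$, add $ub_1$ and $vc$) changes $\inv$ by $c_{3,d_{b_1}}+c_{3,d_c}-c_{3,3}-c_{d_{b_1},d_c}$, and this is \emph{negative} for $(d_{b_1},d_c)=(4,1)$ (value $c_{3,4}+c_{1,3}-1-c_{1,4}\approx-0.085$) and for $(4,2)$ and $(2,4)$; a configuration in which every available edge $b_1c$ has these degrees is not a ``rare edge case'' but a genuine obstruction. The paper instead uses a degree-changing rotation (replace $yu$ by $yv$, so $d_u$ drops to $2$ and $d_v$ rises to $4$), whose worst case is $\approx 0.0089>0$, together with one explicit comparison against $H_{6,5}$. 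For $k=1$ you defer to enumeration, but nothing in your argument bounds the order of $G$ in that case, so the enumeration is not known to be finite; the paper eliminates $k=1$ entirely with a single rotation worth $\approx 0.0593$, and in fact its first (and key) step is to prove that every vertex other than $u,v$ is adjacent to both of them or to neither, which collapses $k=0$ and $k=1$ into non-existence up to small exceptions.

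The most serious problem is the $k=2$ branch, which is where almost all of the paper's work lies. You claim that if $|V(G)|>4$ then the extension through $a$ or $b$ yields a contradiction via Lemma \ref{lem_2opt} on a chain built from $u,a,x,y$. But $H_{5,8}$ and $H_{6,8}$ are themselves extremal graphs with $x_{3,3}>0$, two common neighbours, and more than four vertices, so no such blanket contradiction can exist; moreover the hypotheses of Lemma \ref{lem_2opt} ($d_{v_2}\le 3$, $d_{v_{r-1}}=4$, $d_{v_1}<d_{v_r}$, plus two non-adjacencies) fail for most of the relevant degree patterns (e.g.\ when the outside neighbour has degree $1$ or $2$). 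The paper's treatment of this branch requires showing that at least one common neighbour has degree $4$, then analysing $W=V\setminus\{u,v,x,y\}$ with $\inv$-preserving edge swaps, Lemma \ref{lem_rotation}(b), and explicit numerical comparisons against $G_{6,10}$, $H_{5,7}$, $G_{8,9}$, $H_{6,9}$ and $G_{7,9}$ --- it is only through this case analysis that $H_{5,8}$ and $H_{6,8}$ emerge as the surviving extremal graphs. None of that machinery is present in your sketch.
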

\begin{proof}
	Let $G$ be a connected extremal = chemical graph with  two adjacent vertices $u$ and $v$ of degree 3. Let us first show that all vertices  $x\neq u,v$ are either adjacent to both $u$ and $v$, or to neither.
	If this is not the case then, we consider two cases.
	\begin{itemize}[itemsep=-10pt]
		\item If $u$ and $v$ have a common neighbor $w$, then let $x\neq v$ be a vertex adjacent to $u$ but not to $v$, and let $y\neq u$ be a vertex adjacent to $v$ but not to $u$. Then $G$ is not extremal. Indeed, let $G'$ be the graph obtained from $G$ by replacing $xu$ with $xv$. Then, with $i=d_x$, $j=d_y$ and $k=d_w$, we have 
		\begin{eqnarray*}
			\inv(G') - \inv(G) &{\geq}&  c_{2,4} - c_{3,3}+ \min_{i = 1,2,3,4} (c_{4,i} {-} c_{3,i})\\&& + \min_{j = 1,2,3,4} (c_{4,j} {-} c_{3,j})+\min_{k = 2,3,4} (c_{4,k} {+} c_{2,k}{-}2c_{3,k})\\
			& {\approx}& 0.0593 > 0.
		\end{eqnarray*}
		\item If $u$ and $v$ have no common neighbor, then there are two possible cases.
		\begin{itemize} 
			\item if all neighbors $x\neq u,v$ of $u$ and $v$ have degree 1, then
			$G$ is not extremal since $\inv(G)\approx5.62<5.87\approx\inv(H_{6,5})$ (see Figure \ref{figxij});
			\item if at least one of $u,v$, say $u$, has a neighbor $x\neq v$ of degree at least 2, then $G$ is not extremal. Indeed, let $y$ be the other neighbor of $u$ and let $G'$ be the graph obtained from $G$ by replacing $yu$ with $yv$. Then, with $i=d_x$, $j=d_y$ and $k=d_z$, where $z$ is any neighbor of $v$ other than $u$, we have
			\begin{eqnarray*}
				\inv(G') {-} \inv(G) &\geq& c_{2,4}{-}c_{3,3} +\min_{i = 2,3,4}(c_{2,i} {-} c_{3,i})\\&&+\min_{j = 1,2,3,4}(c_{4,j}{-} c_{3,j}) +2\min_{k = 1,2,3,4}(c_{4,k}{-} c_{3,k})\\
				& \approx& 0.0089 > 0.
			\end{eqnarray*}
		\end{itemize}
	\end{itemize}
	\vspace{-8pt}
	Hence, let $x,y$ the the two common neighbors of $u$ and $v$. We next show that at least one of $x,y$ has degree 4, or $G$ is $H_{4,5}$ or $H_{4,6}$.
	\begin{itemize} [itemsep=0pt]
		\item If $x$ has degree 3, then $x$ is adjacent to $y$ since $xu$ is an edge linking two vertices of degree 3 and we have seen that this implies that $y$ cannot be adjacent to exactly one of $u,x$. Hence, either $G \simeq H_{4,6}$, or $y$ has degree 4. 
		\item If $x$ has degree 2, then we know from the previous case that $y$ is of degree 2 or 4. Hence, either $G \simeq H_{4,5}$, or $y$ has degree 4. 
	\end{itemize}
	So, without loss of generality, assume $d_x=4$. 
	Let 
	$W=V\setminus \{u,v,x,y\}$ where $V$ is the vertex set of $G$. Assume that $W$ contains at least one vertex $w$ in $W$ of degree at least 3.
	\begin{itemize}[itemsep=0pt]
		\item If $d_w=4$, then let $z$ be a vertex adjacent to $w$ but not to $x$ and let $G'$ be the graph obtained from $G$ by replacing the edges $ux$ and $wz$ by $uw$ and $xz$. Clearly, $\inv(G')=\inv(G)$, which means that $G'$ is also a connected extremal  chemical graph. But $u$ and $v$ are two adjacent vertices in $G'$, and they are both of degree 3, while $x$ is adjacent to $u$ but not to $v$. We have shown above that this is impossible.
		\item If $d_w=3$, then 
		$w$ is adjacent to $x$. Indeed, if this is not the case, then let $z$ be any vertex in $W$ adjacent to $w$ and let $G'$ be the graph obtained from $G$ by replacing the edges $ux$ and $wz$ by $uz$ and $xw$. Clearly, $\inv(G')=\inv(G)$ and $u,v$ are two adjacent vertices of degree 3 in $G'$ with $z$ adjacent to $u$ but not to $v$, a contradiction.
		Moreover, all neighbors of $w$ in $W$ are adjacent to $x$. Indeed, assume that a vertex $z\in W$ is adjacent to $w$ but not to $x$. Then $d_z\leq 2$ (since vertices of degree 3 in $W$ are adjacent to $x$ and no vertex in $W$ has degree 4), and Lemma \ref{lem_2opt} with the partial chain $z,w,x,u$ shows that $G$ is not extremal, a contradiction.
		In summary, we have shown that $w$ can have only one neighbor in $W$ (else $x$ would be of degree at least 5), which means that $w$ is adjacent to $y$. We know from Lemma \ref{lem_rotation}(b) that the neighbor $z$ of $w$ in $W$ cannot be of degree 2  (since $u$ has degree 3 and is not adjacent to $w$). Hence, $d_z=3$, which means that $w,x$ and $y$ are its three neighbors (as $z,x,y$ are the 3 neighbors of $w$). Hence, $\inv(G)\approx10.08<10.28\approx\inv(G_{6,10})$ (see Figure \ref{figExemples}), a contradiction. 
	\end{itemize}
	
	\noindent Hence, all vertices in $W$ have degree 1 or 2. At least one vertex in $W$ has degree 2, else
	\begin{itemize} 
		\item if $d_y=3$, then $x$ is adjacent to $y$ since $uy$ is an edge linking two vertices of degree 3 and we have seen that this implies that $x$ cannot be adjacent to exactly one of $u,y$. Hence, $\inv(G)\approx7.28<7.36\approx\inv(H_{5,7})$ (see Figure \ref{fign2n3});
		\item if $d_y=4$, then either $G \simeq H_{6,8}$, or $\inv(G)\approx10.04<10.12\approx\inv(G_{8,9})$ (see Figure \ref{figExemples}).
	\end{itemize}
	
	So let $z$ be a vertex of degree 2 in $W$. We know  from Lemmas (2,2) and (1,2) that $z$ is adjacent to $x$ and $y$, which implies that $y$ has degree 4, else $u$ and $y$ are two adjacent vertices of degree 3 and $z$ is adjacent to $y$ but not to $u$, which is impossible. There are therefore three possible cases:
	\begin{itemize}[itemsep=0pt]
		\item if  $W$ has two vertices of degree 2, then $\inv(G)\approx9.28<9.40\approx\inv(H_{6,9})$ (see Figure \ref{fign2n3});
		\item if $W$ has one vertex of degree 2 and $x$ is not adjacent to $y$, then $\inv(G)\approx9.66<9.78\approx\inv(G_{7,9})$ (see Figure \ref{figExemples});
		\item if $W$ has one vertex of degree 2 and $x$ is adjacent to $y$, then $G \simeq H_{5,8}$.\qedhere
	\end{itemize}
	
\end{proof}

\begin{lemm}[\bf 1,3]
	The only connected extremal  chemical graphs $G$ with\linebreak$x_{1,3}(G)>0$ are $H_{4,3}$, $H_{4,4}$, $H_{6,6}$, $H_{7,6}$ and $H_{10,9}$.
\end{lemm}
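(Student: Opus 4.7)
The plan is to emulate the case analyses of Lemmas (1,1), (1,2), (2,2) and (3,3). Let $G$ be a connected extremal chemical graph with an edge $uv$ where $d_u=1$ and $d_v=3$, and let $w_1,w_2$ be the other two neighbors of $v$. I would first restrict $d_{w_1}$ and $d_{w_2}$ using previous lemmas: if $d_{w_i}=3$ then the edge $vw_i$ is a 3-3 edge, so by Lemma (3,3) $G$ would belong to $\{H_{4,5},H_{4,6},H_{5,8},H_{6,8}\}$, but inspection of these graphs shows that none contains a pendant, contradicting $x_{1,3}(G)>0$. Hence $d_{w_1},d_{w_2}\in\{1,2,4\}$.

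Then I would case on the unordered pair $\{d_{w_1},d_{w_2}\}$. The case $\{1,1\}$ immediately gives $G\simeq K_{1,3}=H_{4,3}$. In any case where some $w_i$ has degree $4$, Lemma \ref{lem_2opt} applied to the chain $u,v,w_i,z$, with $z$ a neighbor of $w_i$ distinct from $v$, nonadjacent to $v$, and of degree at least $2$, has its hypotheses satisfied ($d_u=1<d_z$, $d_v=3\leq 3$, $d_{w_i}=4$), so no such $z$ can exist. Consequently every other neighbor of $w_i$ is either a pendant or the remaining vertex $w_j$ (when $w_1w_2$ is an edge). This reduces each subcase to finitely many local shapes, which are identified either with $H_{4,4}$, $H_{6,6}$, $H_{7,6}$, $H_{10,9}$, or excluded by a direct $\inv$ comparison with some reference $G_{n,m}$ from Figure \ref{figExemples}, in the same spirit as the comparisons in Lemma (3,3).

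The cases where some $w_i$ has degree $2$ are the most delicate because Lemma \ref{lem_2opt} does not apply to the three-edge chain $u,v,w_i,x_i$ (since $d_{w_i}=2\neq 4$). Here I would use Lemmas (1,2) and (2,2) to force the second neighbor $x_i$ of $w_i$ to satisfy $d_{x_i}\geq 3$, and Lemma (3,3) to further force $d_{x_i}=4$ (or else identify $G$ with one of the small exceptions already listed). Then Lemma \ref{lem_2opt} does apply to the longer chain $u,v,w_i,x_i,y_i$ whenever $y_i$ is a neighbor of $x_i$ nonadjacent to $w_i$ with $d_{y_i}\geq 2$, again forcing $x_i$'s remaining neighbors to be pendants. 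A rotation-type swap, with the relative cost computed from the cost matrix $C_{\inv}$, settles the residual small cases that remain. The main obstacle is the sheer volume of this case analysis and the care required to verify that the non-canonical extremal graphs $H_{6,6}$, $H_{7,6}$ and $H_{10,9}$ really do arise and only for those pairs $(n,m)$, and that no extremal graph is overlooked; this is tedious but mechanical once the inductive pattern of the earlier lemmas is in place.
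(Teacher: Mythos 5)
Your overall strategy (case analysis on the degrees of the other two neighbors of $v$, pruning via the earlier lemmas and Lemma \ref{lem_2opt}) is the same as the paper's, and your treatment of the degree-4 neighbors is essentially the paper's argument. But there is a genuine gap in the case where some $w_i$ has degree $2$. You claim that Lemmas (1,2) and (2,2) force $d_{x_i}\geq 3$ and that Lemma (3,3) then forces $d_{x_i}=4$. The second step fails: if $d_{x_i}=3$, the edge $w_ix_i$ is a $(2,3)$-edge, not a $(3,3)$-edge, so Lemma (3,3) says nothing about it. The lemma that would rule this out is Lemma (2,3), which is proved \emph{after} Lemma (1,3) and whose proof uses Lemma (1,3), so invoking it here would be circular. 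The missing ingredient is Lemma \ref{lem_rotation}(a): applied to the degree-2 vertex $w_i$ with neighbors $v$ (of degree 3) and $x_i$, it shows that $x_i$ must be adjacent to $v$. Since $v$ already has the three neighbors $u,w_1,w_2$, this forces $x_i=w_j$, collapsing the configuration to a triangle $v,w_i,w_j$; only then do Lemmas (2,2) and (3,3) apply to the edges $w_iw_j$ and $vw_j$ to pin $d_{w_j}$ down to $2$ (giving $H_{4,4}$) or $4$ (after which the short chain $u,v,w_j,y$ and Lemma \ref{lem_2opt} finish the case, giving $H_{6,6}$ or a contradiction). This is exactly how the paper proceeds, and without it your degree-2 case does not close.

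Two smaller points. First, in your proposed application of Lemma \ref{lem_2opt} to the chain $u,v,w_i,x_i,y_i$, the hypothesis on the last vertex is that $y_i$ is nonadjacent to $v$ (the second vertex of the chain), not nonadjacent to $w_i$; as stated your condition does not match the lemma. Second, your justification for excluding $d_{w_i}=3$ says that none of $H_{4,5},H_{4,6},H_{5,8},H_{6,8}$ contains a pendant vertex; this is false for $H_{6,8}$, which has two pendant vertices attached to degree-4 vertices. The correct (and sufficient) observation is that none of these four graphs has an edge joining a vertex of degree 1 to a vertex of degree 3, i.e., $x_{1,3}=0$ for all of them.
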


\begin{proof}
	Let $G$ be a connected extremal  chemical graph with two adjacent vertices $u$ and $v$ such that $d_u=1$ and $d_v=3$. If $v$ has a neighbor $w$ of degree 2, we know from Lemma \ref{lem_rotation}(a) that the second neighbor $x$ of $w$ is adjacent to $v$. Then, either $G \simeq H_{4,4}$ or it follows from Lemmas (2,2) and (3,3) that $d_x=4$. If $x$ has two neighbors of degree 1, then $G \simeq H_{6,6}$, else $x$ has a neighbor $y\neq v,w$ such that $d_y\geq 2$, and Lemma \ref{lem_2opt} with the partial chain $u,v,x,y$ shows that $G$ is not extremal, a contradiction.
	
	So assuming that $G$ is not $H_{4,4}$ or $H_{6,6}$, we know that no neighbor of $v$ has degree 2. It then follows from Lemma (3,3) that they all have degree 1 or 4. If $v$ has a neighbor $x$ of degree 4, then all neighbors $y \neq v$ of $x$ that are also not adjacent to $v$ have degree 1, else Lemma \ref{lem_2opt} with the partial chain $u,v,x,y$ shows that $G$ is not extremal. Hence there are only 4 cases:
	\begin{itemize}[itemsep=0pt]
		\item if all neighbors of $v$ have degree 1, then $G \simeq H_{4,3}$;
		\item if $v$ has only one neighbor of degree 4, then $G \simeq H_{7,6}$;
		\item if $v$ has two non-adjacent neighbors of degree 4, then  $G \simeq H_{10,9}$;
		\item if $v$ has two adjacent neighbors of degree 4, then 
		$\inv(G)\approx 9.18 < 9.24 \approx \inv(H_{8,8})$ (see Figure \ref{fign2n3}).\qedhere
	\end{itemize}
\end{proof}

\begin{lemm}[\bf 2,3]
	The only connected extremal  chemical graphs $G$ with\linebreak$x_{2,3}(G)>0$ are $H_{4,4}$, $H_{4,5}$, $H_{5,6}$ and $H_{6,6}$.
\end{lemm}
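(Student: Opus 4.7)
The plan is to follow the pattern of the earlier ``$(i,j)$'' lemmas in this section. Let $G$ be a connected extremal chemical graph with $x_{2,3}(G)>0$, let $uv$ be an edge of $G$ with $d_u=2$ and $d_v=3$, and let $w$ denote the second neighbor of $u$. I would split the argument according to $d_w\in\{1,2,3,4\}$.

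The three cases $d_w\leq 3$ reduce immediately to previously proved lemmas. If $d_w=1$, then $x_{1,2}(G)>0$ and Lemma (1,2) forces $G\in\{H_{3,2},H_{6,5}\}$; neither of these graphs carries a vertex of degree $3$, contradicting $x_{2,3}(G)>0$. If $d_w=2$, then $x_{2,2}(G)>0$ and Lemma (2,2) forces $G\in\{H_{3,3},H_{4,4},H_{5,5}\}$; inspecting degree sequences, only $H_{4,4}$ is compatible with the hypothesis. If $d_w=3$, then Lemma \ref{lem_rotation}(a) forces $v$ and $w$ to be adjacent (otherwise neither neighbor of $u$ could have degree $3$), so $x_{3,3}(G)>0$ and Lemma (3,3) restricts $G$ to $\{H_{4,5},H_{4,6},H_{5,8},H_{6,8}\}$; again only $H_{4,5}$ contains a $2$-$3$ edge.

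The main work is the case $d_w=4$. Lemma \ref{lem_rotation}(a) once more forces $v$ and $w$ to be adjacent, so $u,v,w$ form a triangle. Applying Lemma \ref{lem_rotation}(b) with the two neighbors of $u$ playing swapped roles (so that the ``$\leq 3$'' endpoint is $v$ and the ``$\geq 3$'' endpoint is $w$), every vertex nonadjacent to $v$ has degree $1$ or $4$. Let $s$ be the third neighbor of $v$, and let $t_1,t_2$ be the two neighbors of $w$ other than $u,v$. I would then split according to whether $s\in\{t_1,t_2\}$.

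Sub-case~A ($s$ is a common neighbor of $v$ and $w$): then $\{u,v,w,s\}$ induces $K_4-us$, and the remaining neighbor $t$ of $w$ is nonadjacent to $v$, giving $d_t\in\{1,4\}$. A short case analysis on $d_s\in\{2,3,4\}$, using the previously proved lemmas applied to $s$ and its neighbors, shows that only $d_s=2$, $d_t=1$ survives, yielding exactly $H_{5,6}$ (degree sequence $(4,3,2,2,1)$). Every other combination is ruled out either by applying Lemma \ref{lem_2opt} to a suitable chain such as $u,v,s,s'$ when $s$ has a neighbor $s'$ of degree $4$, or by comparing $\inv(G)$ against the value of a reference graph $G_{n,m}\in\mathcal{G}_{n,m}$ or $H_{n,m}$ of the same order and size. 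Sub-case~B ($v$ and $w$ share no neighbor other than $u$): then $s,t_1,t_2$ are pairwise distinct and the $t_i$ are nonadjacent to $v$, so $d_{t_1},d_{t_2}\in\{1,4\}$. I would rule out $d_s=2$ (Lemma \ref{lem_rotation}(a) applied to $s$ forces its other neighbor to be a neighbor of $v$, hence to be $w$, contradicting the sub-case) and $d_s=3$ (Lemma (3,3) leaves only $H_{4,5},H_{4,6},H_{5,8},H_{6,8}$, and none of them matches the local degree structure at $u,v,w,s$). Hence $d_s\in\{1,4\}$. The choice $d_s=d_{t_1}=d_{t_2}=1$ produces exactly $H_{6,6}$; every other configuration (with at least one of $s,t_1,t_2$ of degree $4$) is eliminated by either Lemma \ref{lem_2opt} on a chain like $u,v,s,s'$ (when a degree-$4$ neighbor $s'$ of $s$ is available) or by an explicit $\inv$-comparison with the reference extremal graph in $\mathcal{G}_{n,m}$.

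The main obstacle is the bookkeeping inside the $d_w=4$ case: each degree-$4$ choice among $s,t_1,t_2$ introduces three further neighbors (themselves of degree $1$ or $4$ by Lemma \ref{lem_rotation}(b)), so the local structure may branch several levels deep before terminating in leaves. The tools involved --- Lemmas \ref{lem_rotation} and \ref{lem_2opt}, together with a direct $\inv$-comparison against a graph in $\mathcal{G}_{n,m}$ --- are all already available, but choosing the correct swap-chain or reference graph for each branch is the labor-intensive part, entirely in the spirit of the proofs of Lemmas (3,3) and (1,3).
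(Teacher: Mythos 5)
Your proposal is correct and follows essentially the same route as the paper's proof: Lemma \ref{lem_rotation}(a) forces the triangle on $u,v,w$, Lemmas (1,2), (2,2), (1,3) and (3,3) dispose of the small-degree branches and recover $H_{4,4}$, $H_{4,5}$ and $H_{6,6}$, and the remaining configurations fall to Lemma \ref{lem_rotation}(b), Lemma \ref{lem_2opt} and $\inv$-comparisons with the same reference graphs ($G_{8,9}$, $G_{9,9}$, $H_{7,8}$). The only organizational difference is that where the third neighbor of $v$ has degree $4$ the paper performs an $\inv$-preserving swap of the edges $uw$ and $xy$ to collapse the casework to two terminal configurations, whereas you expand all branches directly; your version terminates in the same comparisons but with more bookkeeping.
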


\begin{proof}
	Let $G$ be a connected extremal  chemical graph with two adjacent vertices $u$ and $v$ such that $d_u=2$ and $d_v=3$. We know from Lemma \ref{lem_rotation}(a) that the second neighbor $w$ of $u$ is adjacent to $v$. If $G$ is not equal to $H_{4,4}$ or $H_{4,5}$, it follows from Lemmas (2,2) and (3,3) that $w$ has degree 4. Also, it follows from Lemmas (1,3) and (3,3) that if $G$ is not $H_{6,6}$, then the third neighbor $x\neq u,w$ of $v$ has degree 2 or 4.
	\begin{itemize}
		\item If $d_x=2$, then $x$ is adjacent to $w$ since, by Lemma \ref{lem_rotation}(a), the second neighbor of $x$ must be adjacent to $v$. It follows from Lemma \ref{lem_rotation}(b) that all vertices other than $u,v,w,x$ have degree 1 or 4. Hence, the fourth neighbor $y\neq u,v,x$ of $w$ has degree 1 or 4. If $d_y=1$ then $G \simeq H_{5,6}$. If $d_y=4$, then there are two cases:
		\begin{itemize}
			\item if $y$ has 3 neighbors of degree 1, then 
			$\inv(G)\approx9.92<10.12\approx\inv(G_{8,9})$ (see Figure \ref{figExemples});
			\item if $y$ has a neighbor $z\neq w$ of degree 4, then 
			Lemma \ref{lem_2opt} with the partial chain $u,v,w,y,z$ shows that $G$ is not extremal, a contradiction.
		\end{itemize}
		\item If $d_x=4$, then let $y\neq v,w$ be a neighbor of $x$. It follows from Lemmas \ref{lem_rotation}(b) and \ref{lem_2opt} with the partial chain $u,v,x,y$
		that $d_y=1$. Let $G'$ be the graph obtained from $G$ by replacing the edges $uw$ and $xy$ by $ux$ and $wy$. Clearly, $\inv(G')=\inv(G)$, which means that $G'$ is also a connected  extremal  chemical graph and since $w$ is now the neighbor of $v$ of degree 4 that is not adjacent to $u$, this means that all neighbors of $w$ different from $v,x$ have degree 1. Hence, 
		\begin{itemize}
			\item if $w$ is adjacent to $x$ then $\inv(G)\approx8.85<8.86\approx\inv(H_{7,8})$ (see Figure \ref{fign2n3});
			\item if $w$ is not adjacent to $x$ then $\inv(G)\approx10.35<10.5\approx\inv(G_{9,9})$ (see Figure~\ref{figExemples}).\qedhere
		\end{itemize}
	\end{itemize} 
\end{proof}


\section{Characterization of extremal chemical graphs}\label{sec:MainTheorem}

In this section, we characterize extremal chemical graphs of order $n$ and size $m\geq n-1$. We first consider the connected extremal chemical graphs, and then the non-connected ones. We conclude the section with a property of extremal  chemical graphs or order $n$ and size $m\leq n-2$. 

We start with the proof of Theorem \ref{thm1} that states that a connected extremal chemical graph of order $n$ and size $m$ necessarily belongs to $\mathcal{G}_{n,m}$, except for 22 pairs $(n,m)$.\\

\noindent {\em Proof of Theorem \ref{thm1}.} Observe first that if $(t_1,t_2,t_3,t_4)$ is a quadruplet in $T_{n,m}$ with $t_2+t_3>1$. Then there is $(s_1,s_2,s_3,s_4)\in T_{n,m}$ such that $s_2+s_3<t_1+t_2$ and $f(s_1,x_2,s_3,s_4)>f(t_1,t_2,t_3,t_4)$. Indeed:
\begin{itemize}
	\item If $t_2\geq 2$ then set $s_1=t_1+1$, $s_2=t_2-2$, $s_3=t_3+1$ and $s_4=t_4$. We have, $\sum_{i=1}^4s_i=\sum_{i=1}^4t_i=n$ and
	$\sum_{i=1}^4is_i=\sum_{i=1}^4it_i=2m$, which means that $(s_1,s_2,s_3,s_4)\in T_{n,m}$. Moreover, $$f(s_1,s_2,s_3,s_4)-f(t_1,t_2,t_3,t_4)=c_{1,4}-4c_{2,4}+3c_{3,4}\approx 0.0384 > 0.$$
	\item if $t_3\geq 2$ then set $s_1=t_1$, $s_2=t_2+1$, $s_3=t_3-2$ and $s_4=t_4+1$. We have 
	$\sum_{i=1}^4s_i=\sum_{i=1}^4t_i=n$ and
	$\sum_{i=1}^4is_i=\sum_{i=1}^4it_i=2m$, which means that $(s_1,s_2,s_3,s_4)\in T_{n,m}$. Moreover, $$f(s_1,s_2,s_3,s_4)-f(t_1,t_2,t_3,t_4)=2c_{2,4}-6c_{3,4}+4\approx 0.0591 > 0.$$
	\item if $t_2\geq 1$ and $t_3\geq 1$, then set $s_1=t_1+1$, $s_2=t_2-1$, $s_3=t_3-1$ and $s_4=t_4+1$. Hence, 
	$\sum_{i=1}^4s_i=\sum_{i=1}^4t_i=n$ and
	$\sum_{i=1}^4is_i=\sum_{i=1}^4it_i=2m$, which means that $(s_1,s_2,s_3,s_4)\in T_{n,m}$. Moreover, $$f(s_1,s_2,s_3,s_4)-f(t_1,t_2,t_3,t_4)=c_{1,4}-2c_{2,4}-3c_{3,4}+4\approx 0.0975 > 0.$$
\end{itemize}	

Note that if $s_2+s_3>1$, then we can repeat the same reasoning. We can therefore conclude that if $(t_1,t_2,t_3,t_4)$ is a quadruplet in $T_{n,m}$ with $t_2+t_3>1$, then there is $(s_1,s_2,s_3,s_4)\in T_{n,m}$ such that $s_2+s_3\leq 1$ and $f(s_1,s_2,s_3,s_4)>f(t_1,t_2,t_3,t_4)$.

So let $G$ be a connected extremal chemical graph of order $n$ and size $m$, and suppose that $G$ is not one of the 22 graphs of Figures \ref{fign2n3} and \ref{figxij}. It follows from the lemmas of the previous section that all edges in $G$ have at least one endpoint of degree 4. Since $n_0(G)=0$ (else $G\simeq H_{1,0}$), we have $\inv(G)=f(n_1(G),n_2(G),n_3(G),n_4(G))$. We have shown above that if $n_2(G)+n_3(G)>1$, then there is a quadruplet  $(s_1,s_2,s_3,s_4)$ in $T_{n,m}$ such that $s_2+s_3\leq 1$ and 
$f(s_1,s_2,s_3,s_4)>f(n_1(G),n_2(G),n_3(G),n_4(G))$. Hence, if $n_2(G)+n_3(G)>1$, then it follows from Property \ref{prop:existence} that there is a connected chemical graph $G'$ in $\mathcal{G}_{n,m}$ with $\inv(G')=f(s_1,s_2,s_3,s_4)>f(n_1(G),n_2(G),n_3(G),n_4(G))=\inv(G)$, a contradiction. We can therefore conclude that $n_2(G)+n_3(G)\leq 1$, which implies that $G$ belongs to $\mathcal{G}_{n,m}$.
\qed\\

It follows from Theorem \ref{thm1} and Corollary \ref{cor2} that if $1\leq n-1\leq m$ and $(n,m)$ is not a pair for which there is a graph $H_{n,m}$ in Figure \ref{fign2n3} or \ref{figxij} and if there exists a connected extremal graph of order $n$ and size $m$, then all graphs in $\mathcal{G}_{n,m}$ are extremal and their arithmetic-geometric index is easy to compute since we know the number of edges with endpoints of degree $i$ and $j$ for all $1\leq i\leq j\leq 4$. We can therefore state the following corollary.  

\begin{cor}\label{corbound}
	Let $G$ be a connected extremal chemical graph. If $G$ is not one of the 22 graphs $H_{n,m}$ in Figure \ref{fign2n3}, then
	$\emph\inv(G)=U\!B_{n,m}$, where    $$U\!B_{n,m}=\frac{2n+5m}{6}+\left\{\begin{array}{ll}0& \mbox{\emph{if} } 2m-n\equiv 0 \bmod{3},\\
	\frac{3}{\sqrt{2}}-\frac{13}{6}&\mbox{\emph{if} } 2m-n\equiv 1 \bmod{3},\\\frac{21}{4\sqrt{3}}-\frac{37}{12}&\mbox{\emph{if} } 2m-n\equiv 2 \bmod{3}.\end{array}\right.$$
\end{cor}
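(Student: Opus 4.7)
The plan is to deduce the corollary directly from Theorem \ref{thm1} together with the explicit formula for $\inv(G)$ established at the start of Section \ref{sec:preliminaries}. First, if $G$ is a connected extremal chemical graph not appearing in the list of 22 exceptional graphs, Theorem \ref{thm1} guarantees $G\in\mathcal{G}_{n,m}$. By the definition of $\mathcal{G}_{n,m}$, every edge of $G$ has an endpoint of degree 4, so the identity
\[
\inv(G)=\tfrac{3}{4}n_1(G)+\bigl(\tfrac{3}{\sqrt{2}}-1\bigr)n_2(G)+\bigl(\tfrac{21}{4\sqrt{3}}-\tfrac{3}{2}\bigr)n_3(G)+2\,n_4(G)
\]
derived earlier in the section applies to $G$.

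Next, because $G\in\mathcal{G}_{n,m}$ satisfies $n_0(G)=0$ and $n_2(G)+n_3(G)\leq 1$, Corollary \ref{cor2} combined with Property \ref{prop1} pins down the quadruplet $(n_1(G),n_2(G),n_3(G),n_4(G))$ uniquely in terms of $n$ and the residue of $2m-n$ modulo $3$. The remaining work is to substitute these explicit values into the identity above and to group the linear terms in $n$ and $m$, producing $UB_{n,m}$ in each of the three residue classes.

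Concretely, when $2m-n\equiv 0\pmod 3$ one has $n_2(G)=n_3(G)=0$, $n_1(G)=\tfrac{4n-2m}{3}$ and $n_4(G)=\tfrac{2m-n}{3}$, and a one-line computation gives $\inv(G)=\tfrac{2n+5m}{6}$. In the other two cases, the presence of a single vertex of degree $2$ or $3$ shifts $n_1(G)$ and $n_4(G)$ by small integer amounts; combining the resulting linear correction with the explicit coefficient $\tfrac{3}{\sqrt{2}}-1$ or $\tfrac{21}{4\sqrt{3}}-\tfrac{3}{2}$ should yield the constants $\tfrac{3}{\sqrt{2}}-\tfrac{13}{6}$ and $\tfrac{21}{4\sqrt{3}}-\tfrac{37}{12}$ announced in the statement.

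There is no genuine obstacle in this proof: the corollary is a routine algebraic consequence of Theorem \ref{thm1} and Property \ref{prop1}. The only care required is to verify that the residue-based case split in Property \ref{prop1} matches the case split in the formula for $UB_{n,m}$, which amounts to checking that the shifted values of $n_1(G)$ and $n_4(G)$ contribute exactly the advertised constants after simplification.
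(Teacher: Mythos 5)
Your proposal is correct and follows essentially the same route as the paper: invoke Theorem \ref{thm1} to place $G$ in $\mathcal{G}_{n,m}$, use the edge-cost identity from Section \ref{sec:preliminaries} together with Property \ref{prop1} and Corollary \ref{cor2} to determine $(n_1(G),n_2(G),n_3(G),n_4(G))$ in each residue class of $2m-n$ modulo $3$, and substitute. The only difference is that the paper writes out the two nontrivial substitutions explicitly, whereas you leave them as a claimed routine check; carrying them out does yield the constants $\tfrac{3}{\sqrt{2}}-\tfrac{13}{6}$ and $\tfrac{21}{4\sqrt{3}}-\tfrac{37}{12}$ as stated.
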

\begin{proof}$ $
	Theorem \ref{thm1} shows that $G$ belongs to $\mathcal{G}_{n,m}$. Let us compute $\inv(G)$.
	\begin{itemize} [leftmargin=7mm]
		\item If $2m-n\equiv 0 \bmod{3}$, then $4n-2m\equiv 0 \bmod{3}$, which means that $n_1(G){=}\frac{4n{-}2m}{3}$, $n_2(G){=}n_3(G){=}0$ and $n_4(G){=}\frac{2m{-}n}{3}$. Hence,
		$\inv(G)=\tfrac{3}{4}\tfrac{4n-2m}{3}+2\tfrac{2m-n}{3}=\tfrac{2n+5m}{6}$.
		\item If $2m{-}n\equiv 1 \bmod{3}$, then $4n{-}2m\equiv 2 \bmod{3}$, which means that $n_1(G){=}\frac{4n{-}2m{-}2}{3}$, $n_2(G){=}1, n_3(G){=}0$ and $n_4(G){=}\frac{2m{-}n{-}1}{3}$. Hence,
		$\inv(G)=\tfrac{3}{4}\tfrac{4n-2m-2}{3}+(\tfrac{3}{\sqrt{2}}-1)+2\tfrac{2m-n-1}{3}=\tfrac{2n+5m-13}{6}+\tfrac{3}{\sqrt{2}}$.
		\item If $2m{-}n\equiv 2 \bmod{3}$, then $4n{-}2m\equiv 1 \bmod{3}$, which means that $n_1(G){=}\frac{4n{-}2m{-}1}{3}$, $n_2(G){=}0, n_3(G){=}1$ and $n_4(G){=}\frac{2m{-}n{-}2}{3}$. Hence,
		$\inv(G)=\tfrac{3}{4}\tfrac{4n-2m-1}{3}+(\tfrac{21}{4\sqrt{3}}-\tfrac{3}{2})+2\tfrac{2m-n-2}{3}=\tfrac{2n+5m}{6}+\tfrac{21}{4\sqrt{3}}-\tfrac{37}{12}$.
		\qedhere
	\end{itemize}
\end{proof}

As shown in Table \ref{Table_uB}, if $(n,m)$ is one of the pairs for which there is a graph $H_{n,m}$ in Figure \ref{fign2n3} or \ref{figxij}, then $\inv(H_{n,m})<U\!B_{n,m}$. Hence, the connected graphs in $\mathcal{G}_{n,m}$ are the only connected chemical graphs $G$ of order $n$ and size $m$ with $\inv(G)=U\!B_{n,m}$. The sharp upper bound $\inv(H_{n,m})$ for the 22 pairs $(n,m)$ that are exceptions is slightly smaller than $U\!B_{n,m}$. We give in Table \ref{Table_uB} the values of this upper bound as well as the differences between $U\!B_{n,m}$ and $\inv(H_{n,m})$. We observe that the  largest difference is $\frac{1}{2}$ while the smallest is approximately equal to 0.0384.

\begin{table}[!htb]
	\centering
	\caption{Sharp upper bound $\inv(H_{n,m})$ on the arithmetic-geometric index of graphs of order $n$ and size $m$, for the 22 pairs $(n,m)$ not included in Corollary \ref{corbound}, and difference  with $U\!B_{n,m}$.}
	\label{Table_uB}
	\vspace{0.4cm}\begin{tabular}{|c|c||c|ccc|}
		\multicolumn{1}{c}{$n$}&\multicolumn{1}{c}{$m$}&\multicolumn{1}{c}{$\inv(H_{n,m})$}&\multicolumn{3}{c}{$U\!B_{n,m}-\inv(H_{n,m})$}\\
		\cline{1-6}1 &   0   &0&   $-\frac{11}{4}+\frac{21}{4\sqrt{3}}$   &$\approx$&   0.2811\\[2pt]
		2 &   1   &1&   $\frac{1}{2}$   &$\approx$& 0.5000\\[2pt]
		3 &   2   &$\frac{3}{\sqrt{2}}$&   $\frac{1}{2}$   &$\approx$&   0.5000\\[2pt]
		3 &   3   &3&   $\frac{1}{2}$   &$\approx$&   0.5000\\[2pt]
		4 &   3   &$\frac{6}{\sqrt{3}}$&   $\frac{3}{4}-\frac{3}{4\sqrt{3}}$   &$\approx$&   0.3170\\[2pt]
		4 &   4   &$1+\frac{2}{\sqrt{3}}+\frac{5}{\sqrt{6}}$&   $\frac{3}{2}+\frac{3}{\sqrt{2}}-\frac{2}{\sqrt{3}}-\frac{5}{\sqrt{6}}$   &$\approx$&   0.4254\\[2pt]
		4 &   5   &$1+\frac{10}{\sqrt{6}}$&   $\frac{9}{2}-\frac{10}{\sqrt{6}}$   &$\approx$&   0.4175\\[2pt]
		4 &   6   &6&   $-\frac{11}{4}+\frac{21}{4\sqrt{3}}$   &$\approx$&    0.2811\\[2pt]
		5 &   5   &$\frac{7}{2}+\frac{3}{\sqrt{2}}$&   $-\frac{3}{4}-\frac{3}{\sqrt{2}}+\frac{21}{4\sqrt{3}}$   &$\approx$&  0.1598\\[2pt]
		5 &   6   &$\frac{5}{4}+\frac{3}{\sqrt{2}}+\frac{7}{4\sqrt{3}}+\frac{5}{\sqrt{6}}$&   $\frac{13}{4}-\frac{7}{4\sqrt{3}}-\frac{5}{\sqrt{6}}$   &$\approx$&  0.1984\\[2pt]
		5 &   7   &$1+\frac{9}{\sqrt{2}}$&   $\frac{13}{2}-\frac{9}{\sqrt{2}}$   &$\approx$&   0.1360\\[2pt]
		5 &   8   &$2+\frac{3}{\sqrt{2}}+\frac{7}{\sqrt{3}}$&   $\frac{13}{4}-\frac{3}{\sqrt{2}}-\frac{7}{4\sqrt{3}}$   &$\approx$&   0.1183\\[2pt]
		5 &   9   &$3+\frac{21}{2\sqrt{3}}$&   $4+\frac{3}{\sqrt{2}}-\frac{21}{2\sqrt{3}}$ &$\approx$&   0.0591\\[2pt]
		6 &   5   &$\frac{15}{4}+\frac{3}{\sqrt{2}}$&   $\frac{1}{4}$    &$\approx$&  0.2500\\[2pt]
		6 &   6   &$\frac{5}{2}+\frac{3}{2\sqrt{2}}+\frac{15}{4\sqrt{3}}+\frac{5}{2\sqrt{6}}$&   $\frac{9}{2}-\frac{3}{2\sqrt{2}}-\frac{15}{4\sqrt{3}}-\frac{5}{2\sqrt{6}}$   &$\approx$&  0.2537\\[2pt]
		6 &   7   &$\frac{7}{2}+\frac{6}{\sqrt{2}}$&   $\frac{5}{4}-\frac{6}{\sqrt{2}}+\frac{21}{4\sqrt{3}}$   &$\approx$&   0.0384\\[2pt]
		6 &   8   &$\frac{9}{2}+\frac{7}{\sqrt{3}}$&   $2+\frac{3}{\sqrt{2}}-\frac{7}{\sqrt{3}}$   &$\approx$&   0.0799\\[2pt]
		6 &   9   &$\frac{17}{4}+\frac{3}{\sqrt{2}}+\frac{21}{4\sqrt{3}}$&   $\frac{21}{4}-\frac{3}{\sqrt{2}}-\frac{21}{4\sqrt{3}}$   &$\approx$&  0.0976\\[2pt]
		7 &   6   &$\frac{15}{4}+\frac{4}{\sqrt{3}}+\frac{7}{4\sqrt{3}}$&   $\frac{1}{2}-\frac{1}{2\sqrt{3}}$   &$\approx$&   0.2113\\[2pt]
		7 &   8   &$\frac{5}{2}+\frac{9}{\sqrt{2}}$&   $\frac{13}{2}-\frac{9}{\sqrt{2}}$   &$\approx$&   0.1360\\[2pt]
		8 &   8   &$5+\frac{6}{\sqrt{2}}$&   $\frac{5}{4}-\frac{6}{\sqrt{2}} +\frac{21}{4\sqrt{3}}$  &$\approx$&    0.0384 \\[2pt]
		10 &   9   &$\frac{15}{2}+\frac{11}{2\sqrt{3}}$&   $\frac{1}{4}-\frac{1}{4\sqrt{3}}$   &$\approx$&  0.1057\\[2pt]
		\hline
		
	\end{tabular}
\end{table}

When $m=n-1$, Corollary \ref{corbound} gives an upper bound for chemical trees. More precisely, if $T$ is a chemical tree of order $n$, then 
$$\inv(T)\leq U\!B_{n,n-1}=\frac{7n-5}{6}+\left\{\begin{array}{ll}0& \mbox{\emph{if} } n\equiv 2 \bmod{3},\\
\frac{3}{\sqrt{2}}-\frac{13}{6}&\mbox{\emph{if} } n\equiv 0 \bmod{3},\\\frac{21}{4\sqrt{3}}-\frac{37}{12}&\mbox{\emph{if} } n\equiv 1 \bmod{3}.\end{array}\right.$$
and this bound is reached for all $n$, except for $n=1,2,3,4,6,7,10$ since $H_{1,0},H_{2,1},H_{3,2},H_{4,3},H_{6,5},H_{7,6}$ and $H_{10,9}$ appear in Figure \ref{figxij}. The same upper bound is given in \cite{BIBLIO5}, but the authors did not mention the  7 exceptions. For example, they state that when $n\equiv 1 \bmod{3}$, there are $\frac{n-1}{3}-1$ vertices of degree 4, and one vertex of degree 3 that must be adjacent to vertices of degree 4. This is clearly impossible for $n=1,4,7$ and $10$.\\

We now show that if we remove the constraint that extremal chemical graphs must be connected, then no better value of $\inv$ can be obtained.
\begin{thm}\label{thm:nonconnected}
	Let $G$ be a non-connected  chemical graph of order $n$ and size $m\geq n-1$. If $G$ is extremal, then $G$ belongs to $\mathcal{G}_{n,m}$.  
\end{thm}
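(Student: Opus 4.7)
The plan is to reduce the non-connected case to the connected one by working component by component. Let $G$ be a non-connected extremal chemical graph of order $n$ and size $m\geq n-1$, and let $G_1,\ldots,G_k$ ($k\geq 2$) be its connected components, with $n_i=|V(G_i)|$ and $m_i=|E(G_i)|$. A standard swap argument shows that each $G_i$ must itself be a \emph{connected} extremal chemical graph of order $n_i$ and size $m_i$: otherwise we could substitute for $G_i$ a connected extremal graph with the same parameters and strictly increase $\inv(G)$. By Theorem \ref{thm1}, Corollary \ref{corbound} and Table \ref{Table_uB}, this forces $\inv(G_i)\leq U\!B_{n_i,m_i}$ for every $i$, with equality exactly when $G_i$ is not one of the $22$ exceptional graphs $H_{n_i,m_i}$, i.e.\ when $G_i\in\mathcal{G}_{n_i,m_i}$.

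The crux is then the superadditivity $\sum_{i=1}^k U\!B_{n_i,m_i}\leq U\!B_{n,m}$. Writing
\[
U\!B_{n_i,m_i}=\tfrac{2n_i+5m_i}{6}+\epsilon(a_i),\qquad a_i\equiv 2m_i-n_i\pmod 3,
\]
with $\epsilon(0)=0$, $\epsilon(1)=\tfrac{3}{\sqrt 2}-\tfrac{13}{6}<0$ and $\epsilon(2)=\tfrac{21}{4\sqrt 3}-\tfrac{37}{12}<0$, the linear parts add to $\tfrac{2n+5m}{6}$, so the claim reduces to $\sum_i\epsilon(a_i)\leq\epsilon(a)$ where $a\equiv\sum_i a_i\pmod 3$. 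Since $\epsilon$ takes only three values this becomes a short finite check: equality is automatic when at most one $a_i$ is nonzero, whereas each of the three pairings $\epsilon(1)+\epsilon(1)$, $\epsilon(1)+\epsilon(2)$, $\epsilon(2)+\epsilon(2)$ is already strictly smaller than the corresponding target $\epsilon(2)$, $\epsilon(0)$, $\epsilon(1)$, and any further nonzero residues only widen the gap since $\epsilon\leq 0$.

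To conclude, combining the two inequalities yields $\inv(G)=\sum_i\inv(G_i)\leq\sum_i U\!B_{n_i,m_i}\leq U\!B_{n,m}$. If $(n,m)$ is not one of the $22$ exceptional pairs, Property \ref{prop:existence} produces a connected graph in $\mathcal{G}_{n,m}$ of index $U\!B_{n,m}$, so extremality of $G$ forces equality at every step, meaning each $G_i\in\mathcal{G}_{n_i,m_i}$ and at most one residue $a_i$ is nonzero. Aggregating over the components, the first condition yields $n_0(G)=0$ and that every edge of $G$ still has a degree-$4$ endpoint; moreover, by Corollary \ref{cor2} each $G_i\in\mathcal{G}_{n_i,m_i}$ contributes $n_2(G_i)+n_3(G_i)=1$ when $a_i\neq 0$ and $0$ otherwise, so the second condition gives $n_2(G)+n_3(G)\leq 1$. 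Hence $G\in\mathcal{G}_{n,m}$. If instead $(n,m)$ is one of the $22$ exceptional pairs, Property \ref{22exceptions} already asserts that the unique extremal graph is the connected graph $H_{n,m}$, so no non-connected extremal $G$ exists and the theorem holds vacuously.

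The main obstacle is the superadditivity $\sum_i U\!B_{n_i,m_i}\leq U\!B_{n,m}$: it is not a general convexity property but a numerical feature specific to the three constants $\epsilon(0),\epsilon(1),\epsilon(2)$, expressing that merging two non-trivial residues loses more in the $\epsilon$-correction than it can possibly recover after reduction modulo $3$. Once that finite check is done, the remainder is just bookkeeping around the characterization of connected extremal graphs already in hand.
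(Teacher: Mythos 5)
Your proof is correct, and it takes a genuinely different route from the paper's. The paper argues structurally: it takes a counterexample with the fewest components, swaps an edge of a cycle in one component against an edge of another to either merge components or strictly increase \inv, deduces $x_{3,4}(G)+x_{4,4}(G)=0$, shows no component can have more than $9$ vertices, and finishes by enumerating the $7\times 14$ admissible pairs of small components. You instead reduce everything to arithmetic: each component $G_i$ is itself extremal (substituting \emph{any} better graph of order $n_i$ and size $m_i$ would improve $G$ --- note that your wording ``substitute a connected extremal graph with the same parameters'' presupposes such a graph exists, so it is cleaner to phrase it this way), hence by Theorem~\ref{thm1}, Corollary~\ref{corbound} and Table~\ref{Table_uB} satisfies $\inv(G_i)\le U\!B_{n_i,m_i}$ with equality iff $G_i\in\mathcal{G}_{n_i,m_i}$; the whole theorem then rests on the superadditivity $\sum_i U\!B_{n_i,m_i}\le U\!B_{n,m}$, i.e.\ $\sum_i\epsilon(a_i)\le\epsilon(\sum_i a_i\bmod 3)$. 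That finite check is sound: your three pairwise gaps $\epsilon(2)-2\epsilon(1)$, $-\epsilon(1)-\epsilon(2)$ and $\epsilon(1)-2\epsilon(2)$ are exactly the quantities $c_{1,4}-4c_{2,4}+3c_{3,4}\approx 0.0384$, $c_{1,4}-2c_{2,4}-3c_{3,4}+4\approx 0.0975$ and $2c_{2,4}-6c_{3,4}+4\approx 0.0591$ already computed in the paper's proof of Theorem~\ref{thm1}, and for three or more nonzero residues it suffices to note $3\epsilon(1)<\epsilon(2)=\min_a\epsilon(a)$ (your justification ``only widen the gap since $\epsilon\le 0$'' glosses over the fact that the target $\epsilon(a)$ can increase when a residue is added, but the conclusion survives because $-\epsilon(a_{k+1})$ always compensates). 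The equality analysis and the treatment of the $22$ exceptional pairs via Property~\ref{22exceptions} are both fine. What your argument buys is brevity and transparency; what the paper's buys is the component-merging machinery (swaps that reduce the number of components without decreasing \inv), which is reused afterwards for the corollary on graphs with $m\le n-2$ being disjoint unions of extremal trees --- your proof does not yield that by-product.
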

\begin{proof}

	Assume that the theorem is not valid and let $G$ be a non-connected extremal chemical graph of order $n$ and size $m\geq n-1$ that is a counterexample with the smallest number of connected components. It follows from Property \ref{22exceptions} that $(n,m)$ is not one of the 22 pairs for which there is a graph $H_{n,m}$ in Figure \ref{fign2n3} or \ref{figxij}.  
	Let $G_1,\ldots,G_k$ ($k\geq 2$) be the connected components of $G$, and let $N_i$ and $M_i$ be the order and the size of $G_i$, respectively. Clearly, $\inv(G)=\sum_{i=1}^k\inv(G_i)$.
	Hence, since $G$ is extremal, every $G_i$ is a connected  extremal graph of order $N_i$ and size $M_i$. 
	At least one $G_i$, say $G_1$, contains a cycle  $C$.
	If $C$ contains an edge $xy$ with $d_x=4$ and $d_y\geq 3$ then:
	\begin{itemize}
		\item if $G_2$ contains only one vertex $z$ then let $G'$ be the graph obtained from $G$ by replacing the edge $xy$ by the edge $xz$. Since $y$ belongs to a cycle, at least one of its neighbors $z \neq x$ has degree at least 2. Hence, with $i=d_y$, $j=d_z$ and $k=d_u$, where $u$ is any neighbor of $y$ other than $x$ and $w$, we have
		\begin{eqnarray*}
			\inv(G') {-} \inv(G) &\geq& c_{1,4} +\min_{i = 3,4} \bigl(
			\min_{j = 2,3,4}(c_{i-1,j} {-} c_{i,j})-c_{4,i}\bigr)\\&&+\min_{i = 3,4} \bigl((i{-}2)\min_{k = 1,2,3,4}(c_{i-1,k} {-} c_{i,k})\bigr)\\
			& \approx& 0.0193 > 0.
		\end{eqnarray*}
		Hence $G$ is not extremal, a contradiction.
		\item if $G_2$ contains at least two vertices, then consider any edge $zw$ in $G_2$ and assume without loss of generality that $d_z\leq d_w$. Let $G'$ be the graph obtained from $G$ by replacing the edges $xy$ and $zw$ by the edges $xz$ and $yw$. Then, with $i=d_y$, $j=d_z$ and $k=d_w$, we have
		$$\inv(G') {-} \inv(G)\geq \min_{i = 3,4}\;\min_{j = 1,2,3,4}\;\min_{k = j,\ldots,4}(c_{4,j}+c_{i,k}-c_{4,i}-c_{j,k})=0.$$
		Moreover, all cases where $\inv(G')=\inv(G)$ have $d_x=d_w$ or/and $d_y=d_z$. Hence, $G'$ has a smaller number of connected components than $G$, while $n_i(G)=n_i(G')$ for  $0\leq i\leq 4$ and $x_{i,j}(G)=x_{i,j}(G')$ for $1\leq i\leq j\leq 4$. It follows that $G'$ is also extremal, and either both of $G$ and $G'$ belong to $\mathcal{G}_{n,m}$, or none of them. If $G'$ is connected, then we know from Theorem \ref{thm1} that $G'$ (and hence also $G$) belongs to $\mathcal{G}_{n,m}$, which means that $G$ is not a counterexample to the theorem, a contradiction. If $G'$ is not connected, then $G$ is not a counterexample to the theorem with the smallest number of connected components, a contradiction.
	\end{itemize}

	Note that if $G_1$ belongs to $\mathcal{G}_{N_1,M_1}$, 
	then the cycle $C$ contains two adjacent vertices of degree 4 (since vertices of degree 1 do not belong to a cycle and there is at most one vertex of degree 2 or 3 in $G_1$). Also, the 8 graphs $H_{5,6}$, $H_{5,7}$, $H_{5,8}$, $H_{5,9}$, $H_{6,6}$, $H_{6,7}$, $H_{6,8}$, $H_{6,9}$, in Figures \ref{fign2n3} and \ref{figxij} which have a cycle and an edge linking a vertex of degree 4 to a vertex of degree at least 3, have such an edge in a cycle.  Hence, $x_{3,4}(G_i)+x_{4,4}(G_i)=0$ for all connected components $G_i$ of $G$ with a cycle.
	
	Suppose now that $G_2$ contains an edge $xy$ with $d_x=4$ and $d_y\geq 3$. Consider any edge $zw$ on $C$ and assume without loss of generality that $d_z\leq d_w$.
	Let $G'$ be the graph obtained from $G$ by replacing the edges $xy$ and $zw$ by the edges $xz$ and $yw$. Then, with $i=d_y$, $j=d_z$ and $k=d_w$, we have
	$$\inv(G') {-} \inv(G)\geq \min_{i = 3,4}\;\min_{j = 2,3,4}\;\min_{k = j,\ldots,4}(c_{4,j}+c_{i,k}-c_{4,i}-c_{j,k})=0.$$
	As above, the only cases where $\inv(G')=\inv(G)$ have $d_x=d_w$ or/and $d_y=d_z$. Hence, either $G$ is not a counterexample to the theorem, or it is not a counterexample with the smallest number of connected components, a contradiction.
	
	Hence, we know that $x_{3,4}(G)+x_{4,4}(G)=0$. We now prove that no connected component of $G$ can have more than 9 vertices. So assume $G$ has a connected component $H$ of order $N\geq 10$ and size $M$. We know from Theorem \ref{thm1} that there are two possible cases:
	\begin{itemize} 
		\item if $H$ is one of the 22 graphs in Figures \ref{fign2n3} and \ref{figxij}, then $H\simeq H_{10,9}$, which implies $x_{3,4}(G){\geq }x_{3,4}(H){=}2$, a contradiction.
		\item if $H$ belongs to $\mathcal{G}_{N,M}$, then  $n_3(H)=0$ else $x_{3,4}(G)\geq x_{3,4}(H)=3$, Hence, $$x_{4,4}(H)\geq  m-n_1(H)-2\geq \Bigl\lceil m-\frac{4n-2m}{3}-2\Bigr\rceil=\Bigl\lceil\frac{5m-4n-6}{3}\Bigr\rceil.$$
		The four cases here below show that $x_{3, 4}(G) + x_{4,4}(G)\geq x_{3, 4}(H) +  x_{4,4}(H)\geq 1$, a contradiction.
		\begin{itemize}
			\item If $M=N-1=9$ then $x_{3,4}(H)=2$. \item If $M=N-1=10$, then $x_{4,4}(H)=2$. \item If  $M=N-1\geq 11$, then $x_{4,4}(H)\geq \lceil \frac{N-11}{3}\rceil\geq 1$. \item If $M\geq N\geq 10$ then $x_{4,4}(H)\geq \frac{N-6}{3}>1$. 
		\end{itemize}
	\end{itemize}
	
	We thus know that $3\leq N_1\leq 9$, $N_1\leq M_1 \leq \min\{2N_1,\frac{N_1(N_1-1)}{2}\}$ and $G_1$ has no edge linking a vertex of degree 4 to a vertex of degree at least 3. It is easy to check that there are exactly 7 such graphs, namely, $H_{3,3}$, $H_{4,4}$, $H_{4,5}$, $H_{4,6}$, $H_{5,5}$, $H_{7,8}$ and $H_{8,8}$ (see Figures \ref{fign2n3} and \ref{figxij}). Also, $1\leq N_2\leq 9$, $N_2-1\leq M_2 \leq \min\{2N_2,\frac{N_2(N_2-1)}{2}\}$ and $G_2$ has no edge linking a vertex of degree 4 to a vertex of degree at least 3. There are only 14 such graphs, namely, the 7 graphs mentioned above, and $H_{1,0}$, $H_{2,1}$, $H_{3,2}$, $H_{4,3}$, $G_{5,4}$, $H_{6,5}$ and $G_{9,8}$ (see Figures \ref{figExemples}, \ref{fign2n3} and \ref{figxij}).
	
	Let $i_{n,m}$ be equal to $\inv(G)$, where $G$ is any connected extremal chemical graph of order $n$ and size $m$. It is easy to check by enumeration that $i_{N_1,M_1}+i_{N_2,M_2}< i_{N_1+N_2,M_1+M_2}$ for the 7 pairs $(N_1,M_1)$ and the 14 pairs $(N_2,M_2)$. Hence by removing $G_1$ and $G_2$ and replacing these two connected components of $G$ by a connected extremal chemical graph of order $N_1+N_2$ and size $M_1+M_2$, one gets a graph $G'$ with $\inv(G)< \inv(G')$, which means that $G$ is not extremal, a contradiction.
\end{proof}

Corollary \ref{cor2} shows that all graphs in $\mathcal{G}_{n,m}$ have the same $\inv$ value, which means that they are all extremal if $(n,m)$ is not a pair appearing in Figures \ref{fign2n3} or \ref{figxij}. Hence, putting together Property \ref{22exceptions} and Theorems \ref{thm1} and \ref{thm:nonconnected}, we get the following characterization of extremal chemical graphs.\\

\begin{thm*}[Characterization of extremal chemical graphs]
	A chemical graph $G$ of order $n$ and size $m\geq n-1$ is extremal if and only if $G$ is one of the 22 graphs in Figures \ref{fign2n3} and \ref{figxij} or $G$ belongs to $\mathcal{G}_{n,m}$.
\end{thm*}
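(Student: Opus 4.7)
The plan is to combine the results already established in the paper; no new case analysis is required. For the forward (``extremal implies structured'') direction, let $G$ be an extremal chemical graph of order $n$ and size $m \geq n-1$. If $G$ is connected, Theorem \ref{thm1} yields directly that $G$ is one of the 22 graphs $H_{n,m}$ in Figures \ref{fign2n3} and \ref{figxij} or else $G \in \mathcal{G}_{n,m}$. If $G$ is not connected, the hypothesis $m\geq n-1$ places us in the setting of Theorem \ref{thm:nonconnected}, which gives $G\in\mathcal{G}_{n,m}$ directly.

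For the reverse direction, first suppose $G$ is one of the 22 graphs; Property \ref{22exceptions} then states explicitly that $G$ is extremal at its order and size. Now suppose instead that $G\in\mathcal{G}_{n,m}$. By the definition of $\mathcal{G}_{n,m}$, every edge of $G$ has at least one endpoint of degree $4$ and $n_0(G)=0$, so the computation preceding Property \ref{prop1} gives $\inv(G)=f(n_1(G),n_2(G),n_3(G),n_4(G))$. Moreover, Corollary \ref{cor2} tells us that this quadruplet is uniquely determined by the pair $(n,m)$, so $\inv$ is constant on $\mathcal{G}_{n,m}$ with value $U\!B_{n,m}$ as computed in Corollary \ref{corbound}. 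To promote this common value to the maximum, I would observe that some extremal chemical graph $G^{*}$ of order $n$ and size $m$ necessarily exists (the set is finite and nonempty) and apply the forward direction already proved: $G^{*}$ is either an $H_{n,m}$ from Figures \ref{fign2n3}--\ref{figxij} or lies in $\mathcal{G}_{n,m}$.

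The only subtle point, and the one I expect to require genuine care, is ensuring that the two alternatives on the right-hand side do not collide awkwardly at the 22 exceptional pairs. For each such pair $(n,m)$, the set $\mathcal{G}_{n,m}$ is in fact empty: the unique candidate quadruplet dictated by Property \ref{prop1} violates at least one of the feasibility constraints that appear in the proof of Property \ref{prop:existence}, such as $3n_3(G)\leq n_4(G)$, $2n_2(G)\leq n_4(G)$, $n_1(G)\leq 4n_4(G)$, or $m-n_1(G)-2n_2(G)-3n_3(G)\leq \tfrac{n_4(G)(n_4(G)-1)}{2}$. A routine case check (or, equivalently, the combination of Property \ref{22exceptions} with Table \ref{Table_uB}, which shows $\inv(H_{n,m})<U\!B_{n,m}$ for these 22 pairs) confirms the emptiness. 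Once this is in hand, at any non-exceptional $(n,m)$ the extremal $G^{*}$ from the previous paragraph cannot be one of the 22 $H_{n,m}$ and hence lies in $\mathcal{G}_{n,m}$, so $\inv(G^{*})=\inv(G)$ and $G$ is itself extremal, completing the reverse direction and thus the characterization.
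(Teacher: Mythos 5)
Your proof is correct and takes essentially the same route as the paper, which likewise just assembles Theorems \ref{thm1} and \ref{thm:nonconnected} with Property \ref{22exceptions} and the constancy of \inv{} on $\mathcal{G}_{n,m}$ from Corollary \ref{cor2}. Your explicit verification that $\mathcal{G}_{n,m}=\emptyset$ at the 22 exceptional pairs (without which the ``if'' direction could fail there) makes precise a point the paper leaves implicit, and your argument for it is sound.
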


We indicate in Table \ref{TableConNonCon} the number of connected and non-connected extremal chemical graphs of order $n$ and size $m$ for $1\leq n \leq 14$ and $n-1\leq m\leq \min \{2n,\tfrac{n(n-1)}{2}\}$. For example, for $n=12$ and $m=11$, we see that there are exactly one connected and one non-connected extremal chemical graph and these two graphs are represented at the bottom of Figure \ref{figExemples}. The 22 pairs $(n,m)$ for which $H_{n,m}$ is the only extremal graph are shown in gray boxes. We observe that the number of connected extremal chemical graphs grows exponentially, but not in a monotonic way.

The proof of Theorem \ref{thm:nonconnected} shows that if a non-connected chemical graph $G$ contains a cycle, then there is a chemical graph $G'$ having fewer connected components than $G$ and such that $
\inv(G)\leq \inv(G')$. This leads to the following corollary.
\begin{cor}
	For all $n$ and $m$ with $0\leq m\leq n-2$ there is a chemical forest $G^*$ which is a disjoint union of extremal chemical trees and  such that $\emph\inv(G)\leq \emph\inv(G^*)$ for all chemical graphs $G$ of order $n$ and size $m$.
\end{cor}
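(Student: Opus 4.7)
The plan is to combine two ingredients already at hand: the cycle-removal observation recorded in the paragraph immediately preceding the corollary (namely, that any non-connected chemical graph containing a cycle can be transformed into one with strictly fewer components without decreasing $\inv$), and the characterization of extremal chemical trees (the main theorem applied with size $n-1$). Because $m \leq n-2$, every chemical graph of order $n$ and size $m$ has at least $n-m \geq 2$ components, so the class is entirely non-connected, and within it the forests are exactly those realizing the minimum number of components, namely $n-m$.

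First I would pick, among the finitely many chemical graphs of order $n$ and size $m$, one that maximizes $\inv$, and call it $G^*$. If $G^*$ contained a cycle, the cited observation would yield a chemical graph $G'$ of the same order and size with strictly fewer components and $\inv(G') \geq \inv(G^*)$; extremality of $G^*$ forces equality, so $G'$ is again extremal. Iterating, the strictly decreasing sequence of component counts must eventually reach $n-m$, at which point the current graph (which I again call $G^*$) is an extremal chemical forest. Writing $G^* = T_1 \cup \cdots \cup T_{n-m}$ with each $T_i$ a tree of order $n_i$ (isolated vertices counting as trivial trees of order $1$), suppose some $T_i$ were not extremal among chemical trees of order $n_i$. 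Then replacing $T_i$ by an extremal chemical tree $T_i^*$ of order $n_i$ (which exists by the characterization applied with size $n_i - 1$) would produce a chemical graph of the same order and size as $G^*$ but with strictly larger $\inv$, contradicting extremality. Hence each $T_i$ is extremal, and by construction $\inv(G) \leq \inv(G^*)$ for every chemical graph $G$ of order $n$ and size $m$.

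The only nontrivial ingredient is the cycle-removal claim invoked in the paragraph above. It is essentially an edge-swap argument along the lines of the proof of Theorem \ref{thm:nonconnected}: given a cycle in one component $G_1$ and either an isolated vertex or an edge in another component $G_2$, an appropriate exchange merges $G_1$ and $G_2$ into one component while the change in $\inv$ is bounded below by a minimum over a short list of entries of the cost matrix $C_{\inv}$, all of which turn out to be non-negative. Once this edge-swap is taken as given (as the paper does), the two-step extremality-and-replacement argument above completes the proof, and the resulting $G^*$ is a chemical forest which is a disjoint union of extremal chemical trees with the required universal upper-bound property.
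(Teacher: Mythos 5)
Your proposal is correct and follows essentially the same route as the paper, which simply invokes the cycle-removal observation from the proof of Theorem \ref{thm:nonconnected} (merging components along a cycle edge without decreasing \inv) and leaves the rest implicit. Your write-up fills in the two details the paper omits --- that the iteration must terminate in a forest with exactly $n-m$ components, and that each component tree can be replaced by an extremal chemical tree of the same order --- both of which are exactly the intended completion of the argument.
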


\begin{table}[!htb]\footnotesize
	\centering
	\caption{Number of connected and non-connected extremal chemical graphs of order $n$ and size $m$ for $1\leq n \leq 14$ and $n-1\leq m\leq \min \{2n,\tfrac{n(n-1)}{2}\}$}
	\label{TableConNonCon}\addtolength{\tabcolsep}{-2pt}    
	\begin{tabular}{|r|r|rr|rr|rr|rr|rr|rr|rr|rr|rr|rr|rr|rr|rr|rr|}
		\multicolumn{2}{c}{}&\multicolumn{28}{c}{$n$}\\\cline{3-30}            \multicolumn{1}{l}{}&\multicolumn{1}{r}{}& \multicolumn{2}{|c}{1}& \multicolumn{2}{ c}{2}& \multicolumn{2}{c}{3}& \multicolumn{2}{c}{4}& \multicolumn{2}{c}{5}& \multicolumn{2}{ c}{6}& \multicolumn{2}{c}{7}& \multicolumn{2}{c}{8}& \multicolumn{2}{ c}{9}& \multicolumn{2}{ c}{10}& \multicolumn{2}{c}{11}& \multicolumn{2}{c}{12}& \multicolumn{2}{c}{13}& \multicolumn{2}{c|}{14}\\
		\cline{3-30}\multicolumn{1}{l}{$m$}&\multicolumn{1}{r}{}& \\
		\hhline{-~*{28}{-}}
		0	&&	\cellcolor{gray!60}1	&	\cellcolor{gray!60}0	&&	&&	&&	&&	&&	&&	&&	&&	&&	&&	&&	&&	&&	\\																					
		1	&&	&&	\cellcolor{gray!60}1	&	\cellcolor{gray!60}0	&&	&&	&&	&&	&&	&&	&&	&&	&&	&&	&&	&&	\\																					
		2	&&	&&	&&	\cellcolor{gray!60}1	&	\cellcolor{gray!60}0	&&	&&	&&	&&	&&	&&	&&	&&	&&	&&	&&	\\																					
		3	&&	&&	&&	\cellcolor{gray!60}1	&	\cellcolor{gray!60}0	&	\cellcolor{gray!60}1	&	\cellcolor{gray!60}0	&&	&&	&&	&&	&&	&&	&&	&&	&&	&&	\\																		
		4	&&	&&	&&	&&	\cellcolor{gray!60}1	&	\cellcolor{gray!60}0	&	1	&	0	&&	&&	&&	&&	&&	&&	&&	&&	&&	\\																		
		5	&&	&&	&&	&&	\cellcolor{gray!60}1	&	\cellcolor{gray!60}0	&\cellcolor{gray!60}1	&	\cellcolor{gray!60}0	&	\cellcolor{gray!60}1	&	\cellcolor{gray!60}0	&&	&&	&&	&&	&&	&&	&&	&&	\\															
		6	&&	&&	&&	&&	\cellcolor{gray!60}1	&	\cellcolor{gray!60}0	&	\cellcolor{gray!60}1	&	\cellcolor{gray!60}0	&	\cellcolor{gray!60}1	&	\cellcolor{gray!60}0	&	\cellcolor{gray!60}1	&	\cellcolor{gray!60}0	&&	&&	&&	&&	&&	&&	&&	\\												
		7	&&	&&	&&	&&	&&	\cellcolor{gray!60}1	&	\cellcolor{gray!60}0	&	\cellcolor{gray!60}1	&	\cellcolor{gray!60}0	&	1	&	0	&	1	&	0	&&	&&	&&	&&	&&	&&	\\												
		8	&&	&&	&&	&&	&&	\cellcolor{gray!60}1	&	\cellcolor{gray!60}0	&	\cellcolor{gray!60}1	&	\cellcolor{gray!60}0	&	\cellcolor{gray!60}1	&	\cellcolor{gray!60}0	&	\cellcolor{gray!60}1	&	\cellcolor{gray!60}0	&	1	&	0	&&	&&	&&	&&	&&	\\									
		9	&&	&&	&&	&&	&&	\cellcolor{gray!60}1	&	\cellcolor{gray!60}0	&	\cellcolor{gray!60}1	&	\cellcolor{gray!60}0	&	1	&	0	&	1	&	0	&	1	&	0	&	\cellcolor{gray!60}1	&	\cellcolor{gray!60}0	&&	&&	&&	&&	\\						
		10	&&	&&	&&	&&	&&	1	&	0	&	1	&	0	&	1	&	0	&	1	&	0	&	1	&	0	&	2	&	0	&	1	&	0	&&	&&	&&	\\			
		11	&&	&&	&&	&&	&&	&&	1	&	0	&	1	&	0	&	2	&	0	&	3	&	0	&	1	&	0	&	1	&	0	&	1	&	1	&&	&&	\\			
		12	&&	&&	&&	&&	&&	&&	1	&	0	&	2	&	0	&	4	&	0	&	2	&	0	&	4	&	0	&	6	&	0	&	2	&	0	&	1	&	0	&&	\\
		13	&&	&&	&&	&&	&&	&&	&&	2	&	0	&	3	&	0	&	10	&	0	&	12	&	0	&	4	&	0	&	5	&	1	&	7	&	1	&	2	&	1	\\
		14	&&	&&	&&	&&	&&	&&	&&	2	&	0	&	8	&	0	&	17	&	0	&	8	&	1	&	21	&	1	&	23	&	1	&	5	&	1	&	3	&	1	\\
		15	&&	&&	&&	&&	&&	&&	&&	&&	7	&	0	&	9	&	0	&	47	&	0	&	58	&	1	&	14	&	1	&	27	&	2	&	27	&	3	\\			
		16	&&	&&	&&	&&	&&	&&	&&	&&	6	&	0	&	37	&	0	&	77	&	0	&	31	&	1	&	113	&	2	&	111	&	4	&	18	&	2	\\			
		17	&&	&&	&&	&&	&&	&&	&&	&&	&&	28	&	0	&	35	&	0	&	249	&	0	&	303	&	3	&	59	&	4	&	159	&	11	\\						
		18	&&	&&	&&	&&	&&	&&	&&	&&	&&	16	&	0	&	198	&	0	&	399	&	0	&	134	&	2	&	684	&	8	&	625	&	20	\\						
		19	&&	&&	&&	&&	&&	&&	&&	&&	&&	&&	126	&	0	&	154	&	0	&	1550	&	1	&	1786	&	9	&	298	&	11	\\									
		20	&&	&&	&&	&&	&&	&&	&&	&&	&&	&&	59	&	1	&	1246	&	1	&	2395	&	1	&	707	&	7	&	4620	&	40	\\									
		21	&&	&&	&&	&&	&&	&&	&&	&&	&&	&&	&&	719	&	1	&	845	&	1	&	10801	&	4	&	11855	&	36	\\												
		22	&&	&&	&&	&&	&&	&&	&&	&&	&&	&&	&&	265	&	1	&	8789	&	3	&	16433	&	6	&	4399	&	20	\\												
		23	&&	&&	&&	&&	&&	&&	&&	&&	&&	&&	&&	&&	4721	&	3	&	5440	&	4	&	83399	&	19	\\															
		24	&&	&&	&&	&&	&&	&&	&&	&&	&&	&&	&&	&&	1544	&	3	&	68804	&	12	&	125829	&	28	\\															
		25	&&	&&	&&	&&	&&	&&	&&	&&	&&	&&	&&	&&	&&	35678	&	11	&	40399	&	14	\\																		
		26	&&	&&	&&	&&	&&	&&	&&	&&	&&	&&	&&	&&	&&	10778	&	8	&	590342	&	55	\\																		
		27	&&	&&	&&	&&	&&	&&	&&	&&	&&	&&	&&	&&	&&	&&	300361	&	45	\\																					
		28	&&	&&	&&	&&	&&	&&	&&	&&	&&	&&	&&	&&	&&	&&	88168	&	25	\\																					
		\cline{1-1}\cline{3-30}
	\end{tabular}
\end{table}

\section{Conclusion}

We have determined a sharp upper bound on the value of the arithmetic-geometric index of chemical graphs of order $n$ and size $m\geq n-1$, and we have characterized the chemical graphs that reach the bound.
This allows, for example, to characterize extremal chemical trees as well as extremal unicyclic or bicyclic chemical graphs. For $m\leq n-2$, we have shown that there is an extremal chemical graph or order $n$ and size $m$ which is a disjoint union of extremal chemical trees.

\acknowledgment{The authors would like to thank Pierre Hauweele for his help.}

\singlespacing

\end{document}